\documentclass[11pt]{article}
\setlength{\textwidth}{16cm}
\setlength{\textheight}{20.6cm}
\setlength{\oddsidemargin}{+0.2cm}
\setlength{\topmargin}{0cm}
\usepackage{amsmath}
\usepackage{amssymb}
\usepackage{amsthm}
\usepackage{bm}
\usepackage{url}
\usepackage{xcolor}
\usepackage{graphicx}

\newcommand\sLP{\\[\smallskipamount]}

\newcommand\bLP{\\[\bigskipamount]}
\newcommand\bPP{\\[\bigskipamount]\indent}

\newcommand\CC{\mathbb{C}}
\newcommand\RR{\mathbb{R}}
\newcommand\ZZ{\mathbb{Z}}
\newcommand\FSH{\mathcal{H}}
\newcommand\FSP{\mathcal{P}}
\newcommand\FST{\mathcal{T}}
\newcommand\al\alpha
\newcommand\be\beta
\newcommand\ga\gamma
\newcommand\de\delta
\newcommand\la\lambda
\newcommand\si\sigma
\newcommand\La\Lambda
\newcommand\iy\infty
\newcommand\wb\overline
\newcommand\wh\widehat
\newcommand\wt\widetilde
\newcommand\union\cup
\newcommand\Lan{\Lambda_{n}}

\newcounter{alphc} 
\newcommand\alphlist{\begin{list}{\bf\alph{alphc})}{\usecounter{alphc} 
 \parsep0.2cm \itemsep0.2cm \topsep0.2cm}\setlength\itemindent{-13pt}} 

\DeclareMathOperator{\sgn}{sgn}
\DeclareMathOperator{\id}{id}

\numberwithin{equation}{section}
\newtheorem{theoremA}{Theorem}
\newtheorem{theorem}{Theorem}[section]
\newtheorem{proposition}[theorem]{Proposition}
\newtheorem{lemma}[theorem]{Lemma}
\newtheorem{corollary}[theorem]{Corollary}
\newtheorem{Definition}[theorem]{Definition}
\newenvironment{definition}{\begin{Definition}\rm}{\end{Definition}}
\newtheorem{Remark}[theorem]{Remark}
\newenvironment{remark}{\begin{Remark}\rm}{\end{Remark}}
\newtheorem{Example}[theorem]{Example}
\newenvironment{example}{\begin{Example}\rm}{\end{Example}}
\newcommand\Proof{\noindent{\bf Proof.}\quad}

\begin{document}
\title{A nonsymmetric version of Okounkov's 
$BC$-type\\
interpolation Macdonald polynomials}
\author{Niels Disveld, Tom H. Koornwinder and Jasper V. Stokman}
\AtEndDocument{\bigskip{\footnotesize%
\textsc{N. Disveld, Eikenweg 48, 1092CA Amsterdam, The Netherlands.}\par
\textit{email address:} \url{nielsdisveld@gmail.com} \par
\addvspace{\medskipamount}
\textsc{T.H. Koornwinder, KdV Institute for Mathematics,
University of Amsterdam,}\par
\textsc{Science Park 105-107, 1098 XG Amsterdam, The Netherlands.}\par
\textit{email address:} \url{thkmath@xs4all.nl} \par
\addvspace{\medskipamount}
\textsc{J.V. Stokman, KdV Institute for Mathematics,
University of Amsterdam,}\par
\textsc{Science Park 105-107, 1098 XG Amsterdam, The Netherlands.}\par
\textit{email address:} \url{j.v.stokman@uva.nl} 
}}
\date{}
\maketitle
\begin{abstract}
\noindent
Nonsymmetric interpolation Laurent polynomials in $n$ variables are
introduced, with the interpolation points depending on $q$ and on a
$n$-tuple of parameters $\tau=(\tau_1,\ldots,\tau_n)$. When $\tau_i=st^{n-i}$ 
Okounkov's $3$-parameter $BC_n$-type interpolation Macdonald polynomials are
recovered from the nonsymmetric interpolation Laurent polynomials
through Hecke algebra symmetrisation with respect to a type $C_n$ Hecke algebra
action.
In the appendix we give some conjectures about extra vanishing, based on
Mathematica computations in rank two.
\end{abstract}
\section{Introduction}

The goal of this paper is to introduce and solve a special class of Newton type
interpolation problems for Laurent polynomials in several variables. 
An important special case leads to nonsymmetric analogs of Okounkov's
\cite{O98,O98p2} $BC_n$-type interpolation Macdonald polynomials.

\subsection{Background: (non)symmetric interpolation polynomials}
Denote by $\mathbb{C}[y]^{S_n}$ the ring of symmetric polynomials in $n$
variables $y=(y_1,\ldots,y_n)$. Let $\Lambda_n^+$ be the set of partitions of length at most
$n$.
Associate to a function 
\[
\Omega^+: \{1,\ldots,n\}\times\mathbb{Z}_{\geq 0}\rightarrow\CC
\]
(called a grid) the interpolation points
$\Upsilon^+(\lambda):=(\Omega^+(1,\la_1),\ldots,\Omega^+(n,\la_n))\in\CC^n$
($\la\in\La_n^+$).
Okounkov \cite{O98p2} showed that under explicit generic conditions on the grid
function $\Omega^+$ there exists, for each $\la\in\La_n^+$, a
$I_\la^+(y)\in\CC[y]^{S_n}$ of total degree at most $|\la|:=\la_1+\cdots+\la_n$
satisfying 
\[
I_\la^+(\Upsilon^+(\mu))=0
\] 
for all $\mu\in\Lambda_n^+\setminus\{\lambda\}$ with $|\mu|\leq |\lambda|$ and
satisfying $I_\la^+(\Upsilon^+(\la))\not=0$. Imposing the normalization
condition $I_\la^+(\Upsilon^+(\la))=1$ fixes $I_\la^+(y)$ uniquely (often a
different normalization is used, but this one is the most convenient choice in
the present paper). 

Okounkov \cite[Def. 4.4]{O98p2} calls a grid perfect when the extra vanishing property
$I_\la^+(\Upsilon^+(\mu))=0$ holds true for all $\la, \mu\in\La_n^+$ such that
$\la\nsubseteq\mu$, where $\subseteq$ is the natural inclusion order on
$\Lambda_n^+$. 
For generic $q,s,t\in\CC^*$ the grid
\begin{equation}\label{BCgrid}
\Omega^+(i,m):=st^{n-i}q^m+\frac{1}{st^{n-i}q^m}
\end{equation}
is perfect, and all other perfect grids can be obtained from \eqref{BCgrid} 
by replacing the grid values $\Omega^+(i,m)$ by $a\Omega^+(i,m)+b$ where
$a\in\mathbb{C}^*$ and $b\in\mathbb{C}$ are independent of $i$ and $m$
(but they may depend on the parameters) and by taking limits
(see \cite{O98p2}).

The interpolation polynomials $I_\la^+(y)$ ($\la\in\La_n^+$) associated to the
grid \eqref{BCgrid} are Okounkov's \cite{O98} $3$-parameter family of
$BC_n$-type interpolation Macdonald polynomials.
They admit explicit $q$-integral and combinatorial representations, see \cite{O98,O03},
while a generalized binomial formula provides an explicit expansion of the
Koornwinder polynomial in the $I_\la^+(y)$'s, see \cite{O98,O03}.
Rains \cite[Thm. 3.2]{R05} characterized the $I_\la^+(y)$'s as eigenfunctions
of a linear $q$-difference operator acting on $y_1,\ldots,y_n$ as well as the
parameter $s$. 

An important degenerate perfect grid is
\begin{equation}\label{Agrid}
\Omega^+(i,m):=t^{n-i}q^m.
\end{equation}
In this case the corresponding interpolation polynomials $I_\la^+(y)$
($\la\in\La_n^+$) are the interpolation Macdonald polynomials of
Knop and Sahi \cite{S96,Kn97}, having the Macdonald polynomials as their top
homogeneous components. Their fundamental properties ($q$-difference equations,
extra vanishing, $q$-integral formula, combinatorial formula, binomial formula)
were obtained at the end of the past century \cite{Kn97,O97,O98p0,S96}. 

The theory of symmetric interpolation polynomials associated to perfect grids originates from the study of the Capelli identity and its generalisations, see \cite{KS,S94}.  
By now various classes of symmetric interpolation polynomials $I_\lambda^+(y)$ associated to perfect grids have been realised as eigenvalues of Capelli operators or as images under the Harish-Chandra isomorphism of quantum immanants (see, e.g., \cite{S94,O96,O03,Na,MN,SaSa} and references therein). They have found applications in, e.g., the theory of multivariable special functions associated to classical root systems (see, e.g., \cite{O98,R05,R10}), exactly solvable models (see, e.g., \cite{Na,O03,S96,BWZ}), and infinite dimensional harmonic analysis on Lie groups and symmetric spaces \cite{OO}. An important recent development is the generalisation of the $\textup{BC}_n$-type interpolation Macdonald polynomials to the elliptic level, see \cite{R06, CG}.

Macdonald and Koornwinder polynomials have natural nonsymmetric counterparts,
see \cite{Ch95, No, Sahi99}. 
They are the joint polynomial eigenfunctions of Cherednik's commuting
$q$-difference reflection operators, which in turn constitute part of
Cherednik's \cite{Chbook} polynomial representation of the double affine Hecke
algebra in terms of Demazure--Lusztig operators. 

Nonsymmetric counterparts $I_\al(y)\in\CC[y]$ ($\al\in\ZZ_{\geq 0}^n$) of the
interpolation polynomials $I_\la^+(y)$ were 
introduced in \cite{S96,Kn97} for grids of the form
\begin{equation}\label{Ageneralgrid}
\Omega(i,m):=\tau_iq^m.
\end{equation} 
The associated interpolation points are
$\Upsilon(\be):=\bm{u_\be}(\Upsilon^+(\be^+))\in(\CC^*)^n$ 
($\be\in\ZZ_{\geq 0}^n$), where $\be^+\in\La_n^+$ denotes the unique partition
in the $S_n$-orbit of $\be$ and $u_\be\in S_n$ is the element of
smallest length such that $u_\be(\be^+)=\be$ (here we follow the notations from
Section \ref{sec:2} for the permutation actions of $S_n$ on $(\CC^*)^n$ and
$\ZZ^n$). Up to normalization, $I_\al(y)\in\CC[y]$ is characterized as the
nonzero polynomial of degree at most $|\al|$ such that $I_\al(\Upsilon(\be))=0$
for all $\be\in\ZZ_{\geq 0}^n\setminus\{\al\}$ satisfying $|\be|\leq |\al|$.

Note that in the principal specialization $\tau_i:=t^{n-i}$, the grid
\eqref{Ageneralgrid} reduces to the grid \eqref{Agrid}. In this case the
$I_\al(y)$ ($\al\in\ZZ_{\geq 0}^n$) are nonsymmetric analogs of the
interpolation Macdonald polynomials whose top degree components are
nonsymmetric Macdonald polynomials. They satisfy extra vanishing conditions,
have a natural duality property, are simultaneous eigenfunctions on
inhomogeneous versions of Cherednik operators, and they admit explicit binomial
and evaluation formulas, see \cite{Kn97,S98,SaSt}. 
{\let\thefootnote\relax\footnote{{2010 {\it Mathematics
Subject Classification.} Primary 33D52, 05E05;
Secondary 33D80, 05E10.}}}

\subsection{Nonsymmetric interpolation Laurent polynomials}

In this paper we introduce nonsymmetric counterparts of the interpolation
polynomials $I_\la^+(y)$ for the grid
\begin{equation}\label{BCgeneralgrid}
\Omega^+(i,m):=\tau_iq^m+\frac{1}{\tau_iq^m}.
\end{equation}
In the principal specialization $\tau_i:=st^{n-i}$, the grid
\eqref{BCgeneralgrid} reduces to the perfect grid \eqref{BCgrid} underlying the
$BC_n$-type interpolation Macdonald polynomials.

In case of the grid \eqref{BCgeneralgrid} it is instrumental to look for
nonsymmetric analogs of the symmetric interpolations within the space
$\CC[x^{\pm 1}]$ of {\it Laurent} polynomial in $n$ variables $x=(x_1,\ldots,x_n)$. 
Consider $\CC[y]$ as the subalgebra of $\CC[x^{\pm 1}]$ generated by
$y_i:=x_i+x_i^{-1}$. Then $\CC[y]^{S_n}$ is the algebra $\CC[x^{\pm 1}]^{W_n}$
of $W_n$-invariant Laurent polynomials, where $W_n$ is
the type $C_n$ Weyl group acting by permutations and inversions of the
variables. Note that the interpolation points for $I_\la^+(y)$ in terms of the
$x$-variables are given by 
\[
\Upsilon(\mu):=(\Omega(1,\mu_1),\ldots,\Omega(n,\mu_n))\qquad (\mu\in\La_n^+), 
\]
with $\Omega$ the type $A_{n-1}$ grid defined by \eqref{Ageneralgrid}.

The Weyl group $W_n$ acts on $(\CC^*)^n$ by permutations and inversions,
and on $\ZZ^n$ by permutations and sign changes.
Write $\be^+\in\La_n^+$ for $\be\in\ZZ^n$ the unique partition in the
$W_n$-orbit of $\beta$, and $w_\be\in W_n$ for the element of smallest length
such that $w_\be(\be^+)=\be$. We call $|\al|:=|\al_1|+\cdots+|\al_n|$
the degree of the monomial $x^\al\in\CC[x^{\pm 1}]$ ($\al\in\ZZ^n$). 

Our main result is as follows (see Section \ref{sec:4}).

\begin{theoremA} 
Assume $q\in\CC^*$ is not a root of unity, and denote by $\FST_n$ the generic
set of parameters $\tau=(\tau_1,\ldots,\tau_n)$ defined by \eqref{eq:9}. 
For $\tau\in\FST_n$ and $\al\in\ZZ^n$ there exists a unique Laurent polynomial
$G_\al(x)=G_\al(x;q,\tau)\in\CC[x^{\pm 1}]$ of degree at most $|\al|$
satisfying $G_\al(\overline{\al})=1$ and 
\[
G_\al(\overline{\be})=0\qquad (\be\in\ZZ^n\setminus\{\al\}:\,\,
|\be|\leq |\al|),
\]
where $\overline{\be}:={\bm w_\be}(\Upsilon(\be^+))$. 
Furthermore, 
\[
I_\la^+(y)=\sum_{\al\in W_n\la}G_\al(x)\qquad (\la\in\La_n^+)
\] 
with $I_\la^+(y)$ the symmetric interpolation polynomial relative to the
grid \eqref{BCgeneralgrid}.
\end{theoremA}
The proof of the existence of the interpolation polynomials $I_\la^+(y)$ and $I_\al(y)$ are based on explicit recursion relations, which allow a direct proof by induction to the degree
(see \cite[Prop. 2.7]{O98p2} and \cite[Cor. 4.4]{S96}). We revisit the proof for $I_\la^+(y)$ with the grid \eqref{BCgeneralgrid} from the Laurent polynomial perspective in Section \ref{sec:3}. It forms a convenient starting point for the much more elaborate inductive proof of Theorem 1, which is given in Section \ref{sec:4}. 

The nonsymmetric interpolation polynomials $G_\al(x;q,s,t)$ ($\al\in\ZZ^n$) with parameters $\tau=(\tau_1,\ldots,\tau_n)$ specialized to the $\tau_i:=st^{n-i}$ are nonsymmetric analogs of Okounkov's $BC_n$-type interpolation Macdonald polynomials. In this case the $BC_n$-type interpolation Macdonald polynomials can alternatively be reobtained from the $G_\al(x;q,s,t)$'s by symmetrising with respect to a type $C_n$ Hecke algebra action on $\CC[x^{\pm 1}]$ in terms of Demazure--Lusztig type operators (see Section \ref{sec:5}). It is a first indication that the $G_\al(x;q,s,t)$ are amenable to the Hecke algebra techniques from \cite{Kn97,S98,SaSt}. 
The missing ingredient from this perspective is the interpretation of the $G_\al(x;q,s,t)$'s as simultaneous eigenfunctions of commuting inhomogeneous Cherednik-type operators. This would allow one to involve double affine Hecke algebra techniques in deriving a binomial formula for nonsymmetric Koornwinder polynomials, and in deriving nonsymmetric analogs of extra vanishing and duality (compare with \cite{Kn97,SaSt} for type $A$). We expect this to be the key step towards further applications of the nonsymmetric $\textup{BC}_n$-type interpolation polynomials in the theory of non-symmetric Macdonald-Koornwinder polynomials, algebraic combinatorics and exactly solvable models.

In the appendix we give a conjecture about the extra vanishing, based on Mathematica computations in rank
two.\bLP
{\bf Acknowledgments.}
The authors thank Siddhartha Sahi and Eric Rains for valuable
discussions and comments.
The second and third author thank Masatoshi Noumi for sharing with us
his insight on $BC_n$-symmetric interpolation polynomials.
A substantial
part of
sections 3 and 4 is based on
material in the Master's
Thesis by the first author under supervision of the last two authors (University of Amsterdam, Faculty of
Science, 2017). We thank the referees for valuable comments that led to significant improvements of the text.


\section{Preliminaries}
\label{sec:2}

Throughout the paper we assume that 
$q\in\CC^*:=\CC\setminus\{0\}$ is not a root of unity.
For $a\in\CC$ the {\em $q$-shifted factorial} is given by
$(a;q)_k:=(1-a)(1-qa)\ldots(1-q^{k-1}a)$ ($k=1,2,\ldots$) and
$(a;q)_0:=1$. We also write
$(a_1,\ldots,a_r;q)_k:=(a_1;q)_k\ldots(a_r;q)_k$. 

Let $n\in\ZZ_{>0}$. Write
\[
\Lan:=\ZZ^n,\qquad [1,n]:=\{1,\ldots,n\},\qquad [1,n)=\{1,\ldots,n-1\},
\] 
where $[1,n)$ is taken to be empty if $n=1$. 

For $x=(x_1,\ldots,x_n)\in\CC^n$ and $\al=(\al_1,\ldots,\al_n)\in\Lan$ put
\begin{align*}
|\al|&:=|\al_1|+\cdots+|\al_n|\quad\;\;(\mbox{\emph{weight} of
$\al$}),\\
n_a(\al)&:=\#\{i\,\,\,\vert\,\,\, \al_i=a\}\qquad\quad\mbox{if $a\in\ZZ$},\\
x^\al&:=x_1^{\al_1}\ldots x_n^{\al_n}\qquad\qquad
(\mbox{Laurent monomial}),\\
x_I&:=\prod_{i\in I}x_i\qquad\qquad\qquad\;\mbox{if $I\subseteq[1,n]$}.
\end{align*}
When we write $I=\{i_1,\ldots,i_k\}\subseteq [1,n]$ for a subset of
$[1,n]$ of cardinality $k$, then we will always assume the ordering
$i_1<i_2<\cdots<i_k$ of its elements.  We write $I^\mathsf{c}$ for the
complement of $I$ in $[1,n]$.

A {\em Laurent polynomial} $f$ in the $n$ complex
variables $x$ has the form
\begin{equation}
f(x)=\sum_{\al\in\Lan} c_\al x^\al
\label{eq:1}
\end{equation}
with $c_\al\in\CC$ and $c_\al\ne0$ for only finitely many $\al$.
The {\em degree} of $f$ in \eqref{eq:1} is defined by
\begin{equation*}
\deg(f):=\max\big\{|\al|\;\big|\;c_\al\ne0\big\};\qquad
\deg(f):=-\iy\quad\mbox{if $f$ is identically zero}.
\end{equation*}
Note that $\deg(fg)\le\deg(f)+\deg(g)$, but equality does not
necessarily hold. 
This means that the degree function defines a
filtration on the algebra
$\FSP_n:=\CC[x_1^{\pm 1},\ldots,x_n^{\pm 1}]$ of Laurent polynomials in $x$,
but not a grading. The
filtration is $\FSP_{n}=\bigcup_{d=0}^{\iy}\FSP_{n,d}$ with
$\FSP_{n,d}$ the subspace of $\FSP_n$ consisting of
Laurent polynomials of degree at most $d$. 
Write
$\mathcal{G}\bigl(\FSP_n\bigr)=
\bigoplus_{d=0}^{\iy}\mathcal{G}_{n,d}$
for the associated graded algebra, with
the $d$th graded piece given by
\[
\mathcal{G}_{n,d}:=\FSP_{n,d}/\FSP_{n,d-1}
\]
with $\FSP_{n,-1}:=\{0\}$. We write $[f]_d:=
f+\FSP_{n,d-1}\in\mathcal{G}_{n,d}$
for $f\in\FSP_{n,d}$.

Let $W_n=\{\pm1\}^n\rtimes S_n$ be the Weyl group associated with the root
system of type $C_n$ (and $B_n$ and $BC_n$). Then $\si\in\{\pm1\}^n$ and
$\pi\in S_n$ act on  $\Lan$ (and $\CC^n$) by
\begin{equation}
(\si\al)_j:=\si_j\al_j,\quad
(\pi\al)_j:=\al_{\pi^{-1}(j)}\qquad
(\al\in\Lan\mbox{ or }\CC^n,\;j=1,\ldots,n).
\label{eq:20}
\end{equation}
Equivalently, if $e_1,\ldots,e_n$ is the standard basis of $\RR^n$ then
$\si(e_j)=\si_j\,e_j$ and $\pi(e_j)=e_{\pi(j)}$.
Since $2\pi i\Lan$ is invariant under the action of $W_n$, we can
exponentiate its action on $\CC^n$ to an action on $(\CC^*)^n$.
We will write this action in bold. Then
\begin{equation}
(\bm{\si}x)_j:=x_j^{\si_j},\quad
(\bm{\pi}x)_j:=x_{\pi^{-1}(j)}\qquad
(x\in(\CC^*)^n,\;j=1,\ldots,n)
\label{eq:21}
\end{equation}
and $(\bm{w}x)^\al=x^{w^{-1}\al}$ ($x\in(\CC^*)^n$, $w\in W_n$, 
$\al\in\Lan$).
The action of $W_n$ on  $(\CC^*)^n$ induces
an action of $W_n$ on Laurent
polynomials \eqref{eq:1} by
\begin{equation}
(wf)(x):=f(\bm{w^{-1}}x)\qquad(f\in\FSP_n,\;w\in W_n,\;x\in (\CC^*)^n).
\label{eq:22}
\end{equation}
Thus $(wf)(x):=x^{w\al}$ if $f(x)=x^\al$. Note that $|w\al|=|\al|$ for
$w\in W_n$ and $\al\in\Lan$. In particular,
$\textup{deg}(wf)=\textup{deg}(f)$ for $w\in W_n$ and $f\in
\FSP_n$. 
Hence the $W_n$-action on $\FSP_n$ is an action by filtered algebra automorphisms and induces a $W_n$-action by graded
algebra automorphisms on the associated graded algebra
$\mathcal{G}(\FSP_n)$. We write $\FSP_n^{W_n}$, $\FSP_{n,d}^{W_n}$ and
$\mathcal{G}_{n,d}^{W_n}$ for the subspaces of $W_n$-invariant
elements in $\FSP_n$, $\FSP_{n,d}$ and $\mathcal{G}_{n,d}$,
respectively. By construction the associated graded algebra
$\mathcal{G}(\FSP_n^{W_n})$ of the filtered algebra
$\FSP_n^{W_n}=\bigcup_{d=0}^{\iy}\FSP_{n,d}^{W_n}$ is isomorphic to
$\mathcal{G}(\FSP_n)^{W_n}=\bigoplus_{d=0}^{\iy}
\mathcal{G}_{n,d}^{W_n}$.

If $n>1$ then we write 
\[
x^\prime:=(x_1,\ldots,x_{n-1})
\]
for the $n-1$ complex variables obtained from $x$ by removing
$x_n$. Similarly, if $\tau=(\tau_1,\ldots,\tau_n)$ is a $n$-tuple of
complex numbers, then we write
\[
\tau^\prime:=(\tau_1,\ldots,\tau_{n-1}).
\]
Sometimes we also need to remove an arbitrary complex variable $x_k$
from $x$. In that case we write
\[
x^{(k)}:=
(x_1,\ldots,x_{k-1},x_{k+1},\ldots,x_n).
\]
A similar notation will be employed for $n$-tuples of complex
numbers. Note that $x^\prime=x^{(n)}$ and $\tau^\prime=\tau^{(n)}$.

For the root system $R$ of $C_n$ we take
$\be^i:=e_i-e_{i+1}$ ($i=1,\ldots,n-1$) and
$\be^n:=2e_n$ as the simple roots.
Then the set $R^+$ of positive roots consists of
 the vectors $e_i\pm e_j$ ($i<j$) and $2e_i$ ($i=1,\ldots,n$).
 Write $R^-=-R^+$ for the set of negative roots.
Denote the  simple reflections corresponding to the
simple roots by $s_1,\ldots,s_n$.
Each $w\in W_n$ can be written as a product of simple reflections.
The minimal number of factors in such a product representing $w$ is
called the {\em length} $\ell(w)$ of $w$.
The length of $w$ is also equal to the
number of positive roots sent to negative roots by $w$, see
\cite[Lemma 10.3A]{Hum79} or \cite[Corollary 1.7]{Hum90}.
If $w=s_{i_1}\ldots s_{i_r}$ with $r=\ell(w)$ (a {\em reduced expression}
of $w\in W_n$) then $w_k:=s_{i_k}\ldots s_{i_r}$ gives a reduced expression
for $w_k$ ($k=1,\ldots,r$) and the $r-k+1$ positive roots sent by $w_k$
to negative roots are precisely the positive roots
$s_{i_r} s_{i_{r-1}}\ldots s_{i_{j+1}}\be^{i_j}$ ($j=k,\ldots,r$),
see \cite[Section 1.7]{Hum90}.
The element $w_0$ of maximal length
in $W_n$ is $w_0=-\id_{\RR^n}$.

By a {\em partition} we mean
$\la\in\Lan$ with $\la_1\ge\la_2\ge\cdots\ge\la_n\ge0$.
The \emph{length} $\ell(\la)\in\{0,1,\ldots,n\}$ of $\la$  is the index
such that $\la_i=0$ iff $i>\ell(\la)$.
Denote the set of partitions of length at most $n$
by~$\La_n^+$. For $\al\in\Lan$ there exists a unique partition
$\al^+\in\La^+_n$ in the $W_n$-orbit $\{w\al\}_{w\in W_n}$. 
If $\al\in\ZZ_{\geq 0}^n$ then $\al^+$ can also be characterized as
the unique partition in the $S_n$-orbit $\{\pi\al\}_{\pi\in S_n}$.
For $m\le n$ we embed $\La_m^+\hookrightarrow \La_n^+$ by
$(\la_1,\ldots,\la_m)\mapsto(\la_1,\ldots,\la_m,0,\ldots,0)$.

On $\La_n^+$ the \emph{dominance partial ordering} $\le$
and \emph{inclusion partial ordering} $\subseteq$ are defined by
\begin{align}
\la\le\mu\quad&\mbox{iff}\quad \la_1+\cdots+\la_i\le\mu_1+\cdots+\mu_i\quad
(i=1,\ldots,n),
\label{eq:5}\\
\la\subseteq\mu\quad&\mbox{iff}\quad \la_i\le\mu_i\quad
(i=1,\ldots,n).\nonumber
\end{align}
A partition $\la$ has weight $|\la|=\la_1+\cdots+\la_n$.
If $\la\le\mu$ then $|\la|\le|\mu|$.

For $\la\in\La_n^+$ write
$W_{n,\la}\subseteq W_n$ for the stabilizer subgroup of $\la$.
Then by \cite[Lemma 10.3B]{Hum79}, the subgroup
$W_{n,\la}$ ($\la\in\La_n^+$)
is a {\em parabolic subgroup} of $W_n$, i.e., $W_{n,\la}$ is 
generated by the simple reflections it contains.
Write $W_n^\la$ for the set of $u\in W_{n}$ satisfying $\ell(uv)=\ell(u)+\ell(v)$ for all $v\in W_{n,\la}$.
Then $W_n^\la$ is a complete set of representatives of $W_n/W_{n,\la}$
(see \cite[Section 1.10]{Hum90} for details).

\begin{definition}
\label{def:2.1}
For $\al\in\Lan$ denote by $w_\alpha=\si_\al\pi_\al$ ($\si_\al\in\{\pm 1\}^n$, $\pi_\al\in S_n$) the unique element in $W_n^{\alpha^+}$ 
such that $w_\al(\al^+)=\al$.
\end{definition}

Define the function $\sgn\colon\RR\to\{\pm1\}$ by
$\sgn(a):=1$ if $a\ge0$ and $\sgn(a):=-1$ if $a<0$.
The following explicit description
of $\si_\al$ and $\pi_\al$,
due to Sahi \cite[p.~277]{Sahi99}, plays an important role in what follows.

\begin{lemma}
\label{th:2}
Let $\al\in\Lan$. Then 
\[
\si_\al=(\sgn(\al_1),\ldots,\sgn(\al_n)),
\] 
and
$\pi_\al\in S_n$ is the unique permutation satisfying the following two properties:
\begin{enumerate}
\item[{\bf a.}] $\al^+=(|\al_{\pi_\al(1)}|,\ldots,|\al_{\pi_\al(n)}|)$;
\item[{\bf b.}] Let $a\in\ZZ_{\geq 0}$ such that $n_a(\al^+)>0$. Denote by 
$j_1<j_2$ the indices such that
$\{j\mid\al_j^+=a\}=\{j_1,j_1+1,\ldots,j_2-1\}$. 
\begin{enumerate}
\item If $j_1\le j<j_1+n_a(\al)$ then $\al_{\pi_\al(j)}=a\ge0$ and $\pi_\al(j)$
is increasing in $j$,
\item if $j_1+n_a(\al)\le j<j_2$ then $\al_{\pi_\al(j)}=-a<0$ and $\pi_\al(j)$
is decreasing in $j$.
\end{enumerate}
\end{enumerate}
\end{lemma}

\begin{remark}
Alternatively, $\pi_\al\in S_n$ is the unique permutation such that, for $i<j$,
\begin{equation}\label{altdef}
\pi_\al^{-1}(i)<\pi_\al^{-1}(j)\,\, \Leftrightarrow
\,\,|\al_i|>|\al_j| \hbox{ or }
0\le\al_i=\pm\al_j.
\end{equation}
\end{remark}

See also \cite[p.~277]{Sahi99} for a short formulation of what is essentially
the description of $\pi_\al$ in Lemma \ref{th:2}, and for an example.
\section{Interpolation theorem for $\textup{BC}_n$-symmetric Laurent polynomials}
\label{sec:3}
Following \cite{Ra}, we call a Laurent polynomial \eqref{eq:1} \emph{$BC_n$-symmetric} if it
is invariant under the Weyl group $W_n$. A basis of the linear space
of $\textup{BC}_n$-symmetric Laurent polynomials is given by the
\emph{symmetrized monomials}
\begin{equation*}
m_\la(x):=\sum_{\mu\in W_n\la} x^\mu\qquad(\la\in\La_n^+).
\end{equation*}
The Laurent polynomial $m_\la$ has degree $|\la|$, and 
\begin{equation*}
m_\la=\frac{|W_n^\la|}{|W_n|}\, \tilde m_\la,\quad\mbox{where}\quad
\tilde m_\la(x):=\sum_{w\in W_n} x^{w\la}.
\end{equation*}
\begin{lemma}
\label{th:4}
Let $a\in\CC\backslash\{0\}$. Let $\la\in\La_{n-1}^+\hookrightarrow\La_n^+$
($n>1$).
\alphlist
\item
There are constants $c_\mu$ ($\mu\in\La_{n-1}^+$, $\mu\le\la$)
with $c_\la\ne0$ such that
\begin{equation}
m_\la(x',a)=\sum_{\mu\le\la}c_\mu m_\mu(x')
\label{eq:7}
\end{equation}
as identity in $\FSP_{n-1}^{W_{n-1}}$.
\item
There are constants $d_\mu$ ($\mu\in\La_{n-1}^+$, $\mu\le\la$)
with $d_\la\ne0$ such that
\begin{equation}
m_\la(x')=\sum_{\mu\le\la}d_\mu m_\mu(x',a)
\label{eq:6}
\end{equation}
as identity in $\FSP_{n-1}^{W_{n-1}}$.
\item
For every $\textup{BC}_{n-1}$-symmetric Laurent polynomial $f$ in $x^\prime$
of degree $d$ there exists a $\textup{BC}_n$-symmetric Laurent polynomial $g$ in $x$
of degree $d$ such that $g(x^\prime,a)=f(x^\prime)$.
\end{list}
\end{lemma}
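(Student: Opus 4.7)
My plan is to establish (a), (b), (c) in sequence, each building on the previous.

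For (a) I would directly expand
\[
m_\la(x',a)=\sum_{\mu\in W_n\la}(x')^{\mu'}a^{\mu_n},\qquad \mu':=(\mu_1,\ldots,\mu_{n-1}),
\]
and regroup the terms by the $W_{n-1}$-orbit of $\mu'$. Since every $\mu\in W_n\la$ satisfies $\mu^+=\la$, the multisets $\{|\mu_j|\}_{j=1}^n$ and $\{\la_j\}_{j=1}^n$ coincide; hence $(\mu')^+\in\La_{n-1}^+$ is obtained from $\la$ by deleting a single entry. Such a one-step deletion always satisfies $(\mu')^+_i\le\la_i$ for all $i$, giving $(\mu')^+\subseteq\la$ and hence $(\mu')^+\le\la$ in dominance order. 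Thus only $\nu\le\la$ contribute. To pin down $c_\la$ I would compare the coefficient of $(x')^{\la}$ on both sides: only $\mu=(\la_1,\ldots,\la_{n-1},0)=\la$ itself has $\mu'=\la$ (the multiset constraint forces $|\mu_n|=\la_n=0$), yielding $c_\la=a^0=1$.

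For (b) I would invert the relation in (a). On the subspace of $\FSP_{n-1}^{W_{n-1}}$ spanned by $\{m_\la(x')\mid\la\in\La_{n-1}^+\}$, the assignment $m_\la(x)\mapsto m_\la(x',a)$ from (a) is triangular in the dominance order with unit diagonal, hence invertible. By induction on $\la$ in the dominance order I would solve
\[
m_\la(x')=m_\la(x',a)-\sum_{\mu<\la,\,\mu\in\La_{n-1}^+}c_\mu m_\mu(x')
\]
and recursively substitute the previously obtained expansions of each $m_\mu(x')$ on the right, arriving at $m_\la(x')=\sum_{\mu\le\la}d_\mu m_\mu(x',a)$ with $d_\la=1$.

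For (c) I would expand $f=\sum_{\mu\in\La_{n-1}^+,\,|\mu|\le d}e_\mu m_\mu(x')$ in the symmetrized monomial basis and, applying (b) to each $m_\mu(x')$, define
\[
g(x):=\sum_\mu e_\mu\sum_{\nu\le\mu}d_\nu m_\nu(x)\in\FSP_n^{W_n}.
\]
Then $g(x',a)=f(x')$ by construction, while $\deg(g)\le d$ because $\nu\le\mu$ implies $|\nu|\le|\mu|\le d$, as noted in the paper just before the lemma. I do not foresee a serious obstacle; the only mildly subtle ingredient is the observation that deleting a single entry from a partition of length $\le n$ produces a sub-partition (in the inclusion order, hence in the dominance order), which is what keeps the expansion in (a) indexed by $\nu\le\la$. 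Everything else reduces to inverting a triangular linear system and a one-line construction in the monomial basis.
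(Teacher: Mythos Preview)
Your proposal is correct and follows essentially the same route as the paper. The only cosmetic difference is in part (a): the paper passes to the unnormalized $\tilde m_\la(x)=\sum_{w\in W_n}x^{w\la}$ and writes down the explicit identity $\tilde m_\la(x',a)=2(n-\ell)\tilde m_\la(x')+\sum_{j=1}^\ell(a^{\la_j}+a^{-\la_j})\tilde m_{\la^{(j)}}(x')$, whereas you argue more abstractly that each $(\mu')^+$ arises by deleting one entry of $\la$, hence lies below $\la$ in inclusion (and so dominance) order. Parts (b) and (c) are identical to the paper's argument.
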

\Proof 
{\bf a)} Denote the length of $\la\in\La_{n-1}^+$ by $\ell$. Then
\begin{equation*}
\tilde m_\la(x',a)=\sum_{j=1}^n(a^{\la_j}+a^{-\la_j})\,\tilde m_{\la^{(j)}}(x')
=2(n-\ell)\tilde m_\la(x')+\sum_{j=1}^\ell(a^{\la_j}+
a^{-\la_j})\,\tilde m_{\la^{(j)}}(x').
\end{equation*}
The result now follows from the fact that $\la^{(j)}<\la$ for $j=1,2,\ldots,\ell$.\\
{\bf b)}
From \eqref{eq:7} we get
\[
m_\la(x')=c_\la^{-1} m_\la(x',a)-\sum_{\mu<\la}c_\la^{-1}c_\mu m_\mu(x').
\]
Now \eqref{eq:6} follows by induction on the weight of the partition.\\
{\bf c)}
By \eqref{eq:6}, if $f=m_\la\in\FSP_{n-1}^{W_{n-1}}$ then we can take
$g=\sum_{\mu\in\La_{n-1}^+;\;\mu\le\la}d_\mu m_\mu\in\FSP_n^{W_n}$.\qed
\bPP

Write $R_A:=\{e_i-e_j\mid 1\leq i\not=j\leq n\}\subset R$, which is a root subsystem in $R$ of type $A_{n-1}$.
Then we define the parameter domain $\FST_n$ by
\begin{equation}
\FST_n:=\{\tau\in (\CC^*)^n\mid \tau^\be\not\in q^{\ZZ}\quad \forall\, \beta\in R\setminus R_A\quad \&\quad \tau^\be\not\in q^{\ZZ\setminus\{0\}}
\quad \forall\, \beta\in R_A\},
\label{eq:9}
\end{equation}
where $\tau^\al\in\CC^*$ ($\al\in\La_n$) stands for the monomial $x^\al$ evaluated at $\tau\in(\CC^*)^n$. In other words, $\tau\in (\CC^*)^n$ belongs to $\FST_n$ if $\tau_i\tau_j\not\in q^{\ZZ}$
for $1\leq i\leq j\leq n$ and $\tau_i\tau_j^{-1}\not\in q^{\ZZ\setminus\{0\}}$ for $1\leq i<j\leq n$. Note that
$\mathbf{s}:=(s,\ldots,s)\in\FST_n$ for $s\in\CC^*$ satisfying $s^2\not\in q^\ZZ$, and $\FST_n$ is invariant under scalar multiplication by 
$q$. For $\mu\in\La_n^+$ define $\wb\mu=(\wb\mu_1,\ldots,\wb\mu_n)\in(\CC^*)^n$
by
\begin{equation}
\wb\mu_i:=q^{\mu_i}\tau_i.
\label{eq:8}
\end{equation}
The map $\mu\mapsto\wb\mu$ is injective on $\La_n^+$. In particular,
Sometimes we write $\overline{\mu}=\overline{\mu}(q,\tau)$
and $\overline{\mu}_i=\overline{\mu}_i(q,\tau)$
if it is important to specify the dependence on $q,\tau$.

\begin{remark}\label{parameterremark}
We will develop the
 theory of symmetric and nonsymmetric 
 interpolation Laurent polynomials for parameters $(q,\tau)$ with
$q\in\CC^*$ not a root of unity and $\tau\in\FST_n$ (see \eqref{eq:9}). It is easy to check that the results also hold true 
with $q$ and $\tau_j$ rational indeterminates, 
and for $(q,\tau)$ with $0<|q|<1$ and $0<|\tau_1|<\cdots<|\tau_n|<1$. The latter case requires straightforward adjustments to the proofs of Corollary \ref{cor:9} and Lemma \ref{lem:rf}.
\end{remark}

The following two properties of the interpolation points will play an
important role in what follows,
\begin{equation}\label{special}
\begin{split}
\overline{\mu}(q,\tau)&=(\overline{\mu}(q,\tau^\prime),\tau_n)
\qquad\,\, \,\,\,\,\,\,\quad\,\,
(\mu\in\La_{n-1}^+\hookrightarrow\La_n^+),\\
\overline{\mu}(q,\tau)&=\overline{(\mu-\mathbf{1})}(q,q\tau)\qquad
\qquad(\mu\in\La_{n}^+:\,\, \mu_n>0),
\end{split}
\end{equation}
with $\overline{\mu}(q,\tau^\prime)=
(q^{\mu_1}\tau_1,\ldots,q^{\mu_{n-1}}\tau_{n-1})$ for
$\mu\in\La_{n-1}^+$ the interpolation point
in $(\CC^*)^{n-1}$. 

Put
\begin{equation*}
\La_{n,d}^+:=\{\mu\in\La_n^+\mid |\mu|\le d\}\qquad (d\in\ZZ_{\geq 0}).
\end{equation*}

\begin{proposition}
\label{th:3}
Let $n\in\ZZ_{>0}$, $d\in\ZZ_{\ge0}$ and $\tau\in\FST_n$.
For every map $\wb f\colon \La_{n,d}^+\to\CC$
there exists a unique
$\textup{BC}_n$-symmetric Laurent polynomial $f$ 
of degree
$\le d$ such that
$f(\wb \mu(q,\tau))=\wb f(\mu)$ for all $\mu\in\La_{n,d}^+$.
\end{proposition}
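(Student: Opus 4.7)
The plan is to prove existence and uniqueness together by reducing one to the other via a dimension count. Since the symmetrized monomials $\{m_\mu\mid\mu\in\La_{n,d}^+\}$ form a basis of $\FSP_{n,d}^{W_n}$, both this space and its target $\CC^{\La_{n,d}^+}$ have dimension $|\La_{n,d}^+|$, so the evaluation map $f\mapsto(f(\wb\mu(q,\tau)))_\mu$ between them is bijective iff surjective iff injective. I will prove surjectivity (existence) by induction on $n+d$.

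The base cases are $d=0$ (constants suffice) and $n=1$: one uses the $W_1$-invariance $x\mapsto x^{-1}$ to double the $d+1$ interpolation nodes $q^k\tau_1$ ($0\le k\le d$) to $2(d+1)$ pairwise distinct points $q^{\pm k}\tau_1^{\pm 1}$, where distinctness follows from $0<|\tau_1|<1$ and $0<|q|<1$, so any symmetric Laurent polynomial of degree $\le d$ vanishing on them must be zero; hence uniqueness holds, and existence follows from the dimension count. For the inductive step with $n\ge 2$, $d\ge 1$, I split the interpolation data on $\La_{n,d}^+$ according to whether $\mu_n=0$ or $\mu_n>0$.

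The $\mu_n=0$ part is handled via the first identity in \eqref{special}, $\wb\mu(q,\tau)=(\wb\mu(q,\tau'),\tau_n)$, combined with Lemma~\ref{th:4}(c): the induction hypothesis for $(n-1,d)$ applied to parameters $\tau'\in\FST_{n-1}$ produces a symmetric $g\in\FSP_{n-1,d}^{W_{n-1}}$ interpolating $\wb f|_{\La_{n-1,d}^+}$, and Lemma~\ref{th:4}(c) lifts $g$ to a symmetric $\tilde g\in\FSP_{n,d}^{W_n}$ of degree $\le d$ with $\tilde g(x',\tau_n)=g(x')$, which automatically interpolates $\wb f$ at all $\mu_n=0$ partitions. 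If $d<n$ this already finishes the construction. Otherwise, the $\mu_n>0$ part is dealt with using the symmetric factor
\[
\Psi(x):=\prod_{i=1}^n(1-\tau_n x_i)(1-\tau_n x_i^{-1})\in\FSP_n^{W_n}
\]
of degree $\le n$, which vanishes as soon as some $x_i$ equals $\tau_n^{\pm 1}$. The bijection $\mu\leftrightarrow\nu:=\mu-\mathbf{1}$ between $\{\mu\in\La_{n,d}^+:\mu_n>0\}$ and $\La_{n,d-n}^+$, together with the second identity in \eqref{special}, $\wb\mu(q,\tau)=\wb\nu(q,q\tau)$, converts the residual interpolation problem on the $\mu_n>0$ partitions into one for the pair $(n,d-n)$ with shifted parameters $(q,q\tau)\in\CC^*\times\FST_n$ (note $q\tau\in\FST_n$ because multiplication by $|q|<1$ preserves the chain of inequalities and keeps $|q\tau_n|<1$). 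Applying the induction hypothesis to
\[
\wb h(\nu):=\frac{\wb f(\nu+\mathbf{1})-\tilde g(\wb{\nu+\mathbf{1}}(q,\tau))}{\Psi(\wb{\nu+\mathbf{1}}(q,\tau))}
\]
produces $h^\sharp\in\FSP_{n,d-n}^{W_n}$ interpolating $\wb h$, and $f:=\tilde g+\Psi\cdot h^\sharp$ is then the required element of $\FSP_{n,d}^{W_n}$: $W_n$-symmetry is inherited, $\deg f\le\max(d,n+(d-n))=d$ by the filtration property $\deg(ab)\le\deg a+\deg b$, and the two cases $\mu_n=0$ vs.\ $\mu_n>0$ check separately from the vanishing, respectively non-vanishing, of $\Psi$ at $\wb\mu(q,\tau)$.

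The main technical obstacle is verifying that $\Psi(\wb\mu(q,\tau))\ne 0$ whenever $\mu_n>0$, which is what makes the denominator in $\wb h$ well-defined. Concretely this amounts to ruling out $\tau_n\tau_i=q^{-\mu_i}$ and $\tau_n/\tau_i=q^{\mu_i}$ for each $i=1,\dots,n$, which follows from $0<|\tau_1|<\cdots<|\tau_n|<1$ and $0<|q|<1$ by comparing moduli (for instance $|\tau_n/\tau_i|>1\ge|q^{\mu_i}|$ for $i<n$, and $|\tau_n\tau_i|<1\le|q^{-\mu_i}|$ in general). Everything else is bookkeeping.
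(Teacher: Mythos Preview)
Your proof is correct and follows essentially the same route as the paper's: a dimension count reduces uniqueness to existence, and existence is proved by induction on $n+d$ via the same split $\mu_n=0$ versus $\mu_n>0$, using Lemma~\ref{th:4}(c) and the identities~\eqref{special} in exactly the same way, with your factor $\Psi(x)$ equal to the paper's $\prod_i(x_i-\tau_n)(x_i^{-1}-\tau_n)$. The only cosmetic difference is that you treat $n=1$ as a separate base case via a direct injectivity argument, whereas the paper folds the $n=1$ case into the inductive step (taking $g$ constant there); both organizations are fine.
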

\Proof
First note that both the space $\FSP_{n,d}^{W_n}$ of $\textup{BC}_n$-symmetric Laurent
polynomials
in $x$ of degree at most $d$
and the space of complex-valued functions
on $\{\wb\mu\mid\mu\in\La_{n,d}^+\}$ have dimension $|\La_{n,d}^+|$.
Therefore, surjectivity of the linear map which restricts a $\textup{BC}_n$-symmetric Laurent
polynomial to the set of interpolation points
$\{\wb\mu\mid\mu\in\La_{n,d}^+\}$ implies injectivity, so that existence implies uniqueness.

To prove existence we will use induction on $n+d$.
If $n+d=1$, so $(n,d)=(1,0)$, then $\La_{1,0}^+=\{(0)\}$ and
$\overline{0}=\tau_1$ and there is nothing to prove (take $f$ to be
the appropriate constant function).  Suppose that the existence of the
symmetric interpolation Laurent polynomial, with $(n,d)$ replaced by
$(\widetilde{n},\widetilde{d})$, is true for $\widetilde{n}+\widetilde{d}<n+d$ for all
possible parameters in $\FST_{\widetilde{n}}$ and all possible maps
$\La_{\widetilde{n},\widetilde{d}}^+\rightarrow\CC$.
Fix $\wb f\colon
\La_{n,d}^+\to\CC$ and $\tau\in\FST_n$ and let $\overline{\mu}$ be
$\overline{\mu}(q,\tau)$. To establish the induction step, we need to
prove the existence of a $f\in\FSP_{n,d}^{W_n}$ satisfying
$f(\overline{\mu})=\overline{f}(\mu)$ for all $\mu\in\La_{n,d}^+$.

We first construct a $g\in\FSP_{n,d}^{W_n}$ satisfying the partial
interpolation property
\begin{equation}\label{substep}
g(\overline{\mu})=\overline{f}(\mu)\qquad
(\mu\in\La_{n,d}^+\mbox{ with }\mu_n=0).
\end{equation}

First assume $n>1$. 
By induction, there exists a $\textup{BC}_{n-1}$-symmetric Laurent polynomial $\widetilde{g}$
in $x^\prime$ of degree at most $d$
such that
\begin{equation}
\widetilde{g}\big(\,\wb{\mu}(q,\tau^\prime)\,\big)=\wb f(\mu,0)\qquad
(\mu\in\La_{n-1,d}^+).
\label{eq:10}
\end{equation}
By Lemma \ref{th:4}c) there exists a $\textup{BC}_n$-symmetric Laurent polynomial $g$ in
$x$ of degree at most $d$
such that
$g(x^\prime,\tau_n)=\widetilde{g}(x^\prime)$. Then
\[
g\big(\,\wb{\mu}(q,\tau'),\tau_n\big)=\wb f(\mu,0)\qquad(\mu\in\La_{n-1,d}^+).
\]
Now the first formula of \eqref{special} gives
\eqref{substep}.
If $n=1$ then put $g(x):=\wb f(0)$, where $g$ has degree $d\ge0$.
Then, in particular, $g(\tau_1)=\wb f(0)$. This concludes the
proof of \eqref{substep} in all cases.

Note that \eqref{substep} already concludes the proof of the induction
step when $n>d$. Indeed, in this case we can simply take $f=g$ since
$\mu_n=0$ for all $\mu\in\La_{n,d}^+$.

To complete the induction step we thus may and will assume from now on
that $d\geq n$.
We make the Ansatz that the symmetric interpolation Laurent polynomial
$f$ we are searching for is of the form
\begin{equation}
f(x)=g(x)+h(x)\prod_{i=1}^n(x_i-\tau_n)(x_i^{-1}-\tau_n)
\label{eq:11}
\end{equation}
with $g$ as constructed above and $h\in\FSP_{n,d-n}^{W_n}$.
Then $f(\wb \mu)=\wb f(\mu)$ for all $\mu\in\La_{n,d}^+$ with $\mu_n=0$.
The identity $f(\wb \mu)=\wb f(\mu)$
will also hold for $\mu_n>0$ if $h$ satisfies
\begin{equation}
h(\wb\mu)=\frac{\wb f(\mu)-g(\wb\mu)}
{\prod_{i=1}^n(\wb\mu_i-\tau_n)(\wb\mu_i^{\,-1}-\tau_n)}
\qquad(\mu\in\La_{n,d}^+\mbox{ with }\mu_n>0).
\label{eq:12}
\end{equation}
Note that, since $\tau\in\FST_n$, no factors in the above denominator
vanish. 
So what remains to show is the existence of a $\textup{BC}_n$-symmetric Laurent
polynomial $h\in\FSP_{n,d-n}^{W_n}$ satisfying \eqref{eq:12}.

Note that we have a bijection
\[
\{\mu\in\La_{n,d}^+\mid \mu_n>0\}\overset{\sim}{\longrightarrow} \La_{n,d-n}^+
\]
given by $\mu\mapsto \mu-\mathbf{1}:=(\mu_1-1,\ldots,\mu_n-1)$. 
By the induction hypothesis, there exists a $h\in\FSP_{n,d-n}^{W_n}$
such that
\[
h(\overline{\nu}(q,q\tau))=\frac{\overline{f}(\nu+\mathbf{1})-
g(\overline{\nu+\mathbf{1}})}
{\prod_{i=1}^n\bigl(\overline{(\nu+\mathbf{1})}_i-
\tau_n\bigr)\bigl(\overline{(\nu+\mathbf{1})}_i^{\,-1}-\tau_n\bigr)}\qquad
(\nu\in\La_{n,d-n}^+).
\]
By the second formula of \eqref{special} we have
\[
\overline{\nu+\mathbf{1}}=\overline{\nu}(q,q\tau)\qquad (\nu\in\La_{n,d-n}^+),
\]
hence we conclude that $h\in\FSP_{n,d-n}^{W_n}$ satisfies the desired
interpolation property \eqref{eq:12}. This concludes the proof of the
induction step.\qed
\bPP
In view of Proposition \ref{th:3}
we can give the following definition.
\begin{definition}
\label{th:5}
Fix $\tau\in\FST_n$. 
The $BC_n$-symmetric interpolation Laurent polynomial of degree $\la\in\La_n^+$ is
the unique $\textup{BC}_n$-symmetric Laurent polynomial $R_\la(x;q,\tau)$ in $n$ variables
$x$ of degree at most $|\la|$
such that $R_\la(\wb\la;q,\tau)=1$ and 
\begin{equation*}
R_\la(\wb\mu;q,\tau)=0\qquad\forall\,\mu\in\La_{n,|\la|}^+\setminus\{\la\},
\end{equation*}
where $\wb\mu:=\wb\mu(q,\tau)$. 
\end{definition}
%
It follows from Proposition \ref{th:3} that
$\{R_\la(x;q,\tau)\mid \la\in\La_{n,d}^+\}$
is a linear basis of $\FSP_{n,d}^{W_n}$.

The following two properties, which correspond to \cite[Prop. 2.4]{O98p2} and
\cite[Prop. 2.7]{O98p2} respectively in Okounkov's setup, easily follow from Definition \ref{th:5},
Proposition \ref{th:3} and the two special properties \eqref{special}
of the interpolation points.
%
\begin{proposition}\label{ElementaryProp}
Let $\tau\in\FST_n$.
\label{th:6}
\alphlist
\item For $\la\in\La_{n-1}^+\hookrightarrow\La_n^+$ we have
\[
R_{\la}((x^\prime,\tau_n);q,\tau)=R_{\la}(x^\prime;q,\tau^\prime),
\]
with on the left-hand side the interpolation Laurent polynomial in $n$
variables and on the right-hand side the interpolation Laurent
polynomial in $n-1$ variables.
\item
For $\la\in\La_n^+$ with $\la_n>0$ we have
\[
R_\la(x;q,\tau)=R_{\la-\mathbf{1}}(x;q,q\tau)
\prod_{i=1}^n\frac{(x_i-\tau_n)(x_i^{-1}-\tau_n)}
{(\overline{\la}_i-\tau_n)(\overline{\la}_i^{\,-1}-\tau_n)}.
\]
\end{list}
\end{proposition}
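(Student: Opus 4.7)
The plan is to invoke the uniqueness part of Proposition~\ref{th:3} twice: for each of (a) and (b) I will exhibit a symmetric Laurent polynomial of degree $\leq|\la|$ that satisfies the defining interpolation conditions of $R_\la(x;q,\tau)$, and conclude by uniqueness. The two identities \eqref{special} between interpolation points at parameters $\tau$, $\tau^\prime$ and $q\tau$ are exactly the bridge needed to translate between the three $R$-polynomials appearing in the statement.

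For part (a), set $f(x^\prime):=R_\la((x^\prime,\tau_n);q,\tau)$ for $\la\in\La_{n-1}^+$ viewed inside $\La_n^+$. This is a symmetric Laurent polynomial in $x^\prime$ of degree at most $|\la|$. To match $f$ with $R_\la(x^\prime;q,\tau^\prime)$, I evaluate at $\overline{\mu}(q,\tau^\prime)$ for every $\mu\in\La_{n-1,|\la|}^+$. Regarding $\mu$ as sitting in $\La_{n,|\la|}^+$ by appending a zero, the first identity in \eqref{special} gives $\overline{\mu}(q,\tau)=(\overline{\mu}(q,\tau^\prime),\tau_n)$, so
\[
f(\overline{\mu}(q,\tau^\prime))=R_\la(\overline{\mu}(q,\tau);q,\tau)=\delta_{\mu,\la}
\]
by Definition~\ref{th:5}. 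Uniqueness in $n-1$ variables then forces $f=R_\la(x^\prime;q,\tau^\prime)$.

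For part (b), let $g(x)$ denote the right-hand side. The denominator is nonzero because $\tau\in\FST_n$ together with \eqref{section} forces $\overline{\la}_i\neq\tau_n\neq\overline{\la}_i^{\,-1}$. The polynomial $g$ is $W_n$-invariant, since $R_{\la-\mathbf{1}}$ is symmetric and the $i$-product is manifestly so. For the degree, each factor $(x_i-\tau_n)(x_i^{-1}-\tau_n)=1+\tau_n^2-\tau_n(x_i+x_i^{-1})$ has degree $1$, so the full product has degree $\leq n$ and $g$ has degree $\leq(|\la|-n)+n=|\la|$. To check the interpolation conditions at $\overline{\mu}(q,\tau)$ for $\mu\in\La_{n,|\la|}^+$, I split on $\mu_n$: if $\mu_n=0$ then $\overline{\mu}_n=\tau_n$ kills the $i=n$ factor of the product; if $\mu_n>0$ then the second identity in \eqref{special} gives $\overline{\mu}(q,\tau)=\overline{\mu-\mathbf{1}}(q,q\tau)$, and since $\mu-\mathbf{1}\in\La_{n,|\la-\mathbf{1}|}^+$ with $\mu-\mathbf{1}\neq\la-\mathbf{1}$ whenever $\mu\neq\la$, Definition~\ref{th:5} applied to $R_{\la-\mathbf{1}}$ at parameters $(q,q\tau)$ yields vanishing. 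Finally at $\mu=\la$ the product factor equals $1$ by construction and $R_{\la-\mathbf{1}}(\overline{\la-\mathbf{1}}(q,q\tau);q,q\tau)=1$, giving $g(\overline{\la})=1$; uniqueness in Proposition~\ref{th:3} yields $g=R_\la(x;q,\tau)$.

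I expect no serious obstacle beyond bookkeeping. The one point that merits care is the degree accounting in (b): each factor $(x_i-\tau_n)(x_i^{-1}-\tau_n)$ contributes degree $1$ rather than $2$ under the filtration of $\FSP_n$, which is precisely why $|\la-\mathbf{1}|+n=|\la|$ produces the correct bound on $\deg(g)$ needed to invoke uniqueness.
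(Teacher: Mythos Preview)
Your proof is correct and follows exactly the approach the paper indicates: the authors simply note that both identities ``easily follow from Definition~\ref{th:5}, Proposition~\ref{th:3} and the two special properties~\eqref{special} of the interpolation points,'' and your argument is precisely the natural unpacking of this---in each case you verify that the candidate polynomial is $W$-symmetric of the right degree and satisfies the defining interpolation conditions, then invoke uniqueness. The degree bookkeeping in~(b), the split on $\mu_n=0$ versus $\mu_n>0$, and the use of the two halves of~\eqref{special} are all as intended.
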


Set $\La_{n,-1}^+:=\emptyset$ and write
\[
\widehat{\La}_{n,d}^+:=
\La_{n,d}^+\setminus\La_{n,d-1}^+
\]
for the partitions of length at most $n$ and weight $d$.
Then $\{[m_\la]_d \mid \la\in\widehat{\La}_{n,d}^+\}$ is a linear basis of
$\mathcal{G}_{n,d}^{W_n}$.
Furthermore, by Proposition \ref{th:3},
\[
\{[R_\la]_d\mid \la\in\widehat{\La}_{n,d}^+\}
\]
is also a linear basis of $\mathcal{G}_{n,d}^{W_n}$. In particular,
\[
\textup{deg}(R_\la(x;q,\tau))=|\la|\qquad (\la\in\La_n^+).
\]
The following important property is less immediate.
\begin{proposition}
\label{th:8}
Let $\tau\in\FST_n$ and $\la\in\widehat{\La}_{n,d}^+$. 
The coefficient $c_{\la,\la}$ in the 
expansion 
$R_\la(x;q,\tau)=\sum_{\mu\in\La_{n,d}^+}c_{\la,\mu}m_\mu(x)$
($c_{\la,\mu}\in\CC$) is nonzero.
\end{proposition}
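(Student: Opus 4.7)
The plan is to prove the nonvanishing of $c_{\la,\la}(q,\tau)$ simultaneously with the auxiliary dominance-triangularity assertion
\[
c_{\la,\mu}(q,\tau)=0\quad\text{for every }\mu\in\hat{\La}_{n,d}^+\text{ with }\mu>\la,
\]
by induction on $n+d$. The base cases $d=0$ (where $R_0=1=m_0$) and $n=1$ (where Example \ref{voorbeeld} exhibits the leading coefficient $(-s)^d q^{d(d-1)/2}/(q^d s^2,q^{-d};q)_d$, manifestly nonzero for $s\in\FST_1$, while the triangularity is vacuous) are immediate. The inductive step splits into Case A ($\la_n\ge 1$, handled via Proposition \ref{ElementaryProp}b)) and Case B ($\la_n=0$, handled via Proposition \ref{ElementaryProp}a)).

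In Case A, Proposition \ref{ElementaryProp}b) gives $R_\la(x;q,\tau)=R_{\la-\mathbf{1}}(x;q,q\tau)\,P(x)/P(\overline{\la})$ with $P(x):=\prod_{i=1}^n(x_i-\tau_n)(x_i^{-1}-\tau_n)$. In the graded ring $\textup{Gr}(\FSP_n^{W_n})$ the top part of $P$ equals $(-\tau_n)^n[m_{(1^n)}]_n$, using that each factor has leading term $-\tau_n(x_i+x_i^{-1})$ and $\prod_i(x_i+x_i^{-1})=m_{(1^n)}$. The decisive combinatorial input is that for every $\kappa\in\hat{\La}_{n,d-n}^+$,
\[
[m_\kappa\cdot m_{(1^n)}]_d=[m_{\kappa+\mathbf{1}}]_d\quad\text{in}\quad\textup{Gr}_{n,d}^{W_n};
\]
this is verified by expanding $m_\kappa\prod_i(x_i+x_i^{-1})=\sum_{\al\in W_n\kappa,\,\si\in\{\pm 1\}^n}x^{\al+\si}$, observing that maximal-weight contributions require $\al_i$ and $\si_i$ to share a sign (with $\al_i=0$ allowing either sign of $\si_i$), and checking that each orbit element of $W_n(\kappa+\mathbf{1})$ arises from exactly one such pair (no entry of $\kappa+\mathbf{1}$ vanishes). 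Since $\kappa\mapsto\kappa+\mathbf{1}$ is a dominance-preserving bijection $\hat{\La}_{n,d-n}^+\to\{\mu\in\hat{\La}_{n,d}^+:\mu_n\ge 1\}$ sending $\la-\mathbf{1}$ to $\la$, reading off coefficients yields
\[
c_{\la,\mu}(q,\tau)=\frac{(-\tau_n)^n}{P(\overline{\la})}\,c_{\la-\mathbf{1},\mu-\mathbf{1}}(q,q\tau)\;(\mu_n\ge 1),\qquad c_{\la,\mu}(q,\tau)=0\;(\mu_n=0).
\]
Taking $\mu=\la$ gives (i) from the inductive hypothesis at $(\la-\mathbf{1},q\tau)$ (with $q\tau\in\FST_n$), while (ii) at $\mu>\la$ with $\mu_n=0$ is the second equality and at $\mu>\la$ with $\mu_n\ge 1$ reduces via the first equality to inductive (ii) for $\la-\mathbf{1}$, using that $\mu>\la$ implies $\mu-\mathbf{1}>\la-\mathbf{1}$.

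In Case B, Proposition \ref{ElementaryProp}a) gives $R_\la(x;q,\tau)|_{x_n=\tau_n}=R_\la(x';q,\tau')$. Expanding both sides in monomials and extracting the top-degree-$d$ part in $(n-1)$ variables via Lemma \ref{th:4}a), together with the observation that $[m_\mu(x',\tau_n)]_d=0$ whenever $\mu_n\ge 1$ (since then $\deg_{x'}m_\mu(x',\tau_n)\le|\mu|-\mu_n<d$), yields the triangular system
\[
c'_{\la,\nu}(q,\tau')=2(n-\ell(\nu))\,c_{\la,\nu}(q,\tau)+\sum_{\mu>\nu,\,\mu\in\hat{\La}_{n-1,d}^+}K_{\mu,\nu}\,c_{\la,\mu}(q,\tau)\qquad(\nu\in\hat{\La}_{n-1,d}^+).
\]
A short dominance check (if $\mu\ge\la$ and $\la_n=0$ then $\sum_{j<n}\mu_j\ge d$, forcing $\mu_n\le 0$) shows that the triangularity assertion (ii) for $\la$ in $n$ variables reduces to the vanishing of $c_{\la,\mu}$ for $\mu>\la$ inside $\hat{\La}_{n-1,d}^+$. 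Applying the inductive hypothesis (ii) to $\la$ in $n-1$ variables makes every $c'_{\la,\mu}(q,\tau')$ with $\mu>\la$ vanish, and solving the triangular system from the maximum of $\hat{\La}_{n-1,d}^+$ downward then gives (ii) in Case B; the equation at $\nu=\la$ finally collapses to $c'_{\la,\la}(q,\tau')=2(n-\ell(\la))\,c_{\la,\la}(q,\tau)$, whence (i) follows from the inductive hypothesis in $n-1$ variables together with $\ell(\la)\le n-1$. The main obstacle is precisely that (i) in Case B genuinely requires the auxiliary dominance-triangularity (ii), and (ii) in Case B can only be reduced in the parameter $n$, whereas Case A reduces both in $d$ via the product formula; this necessity of running the two assertions together on $n+d$ is what makes Proposition \ref{th:8} less immediate than the parts of Proposition \ref{ElementaryProp}.
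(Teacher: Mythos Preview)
Your argument is correct, and in Case A ($\la_n\ge 1$) it is essentially identical to the paper's; both use Proposition~\ref{ElementaryProp}b) and the graded identity $[m_\kappa\cdot m_{(1^n)}]_d=[m_{\kappa+\mathbf{1}}]_d$ (the paper writes $m_{\mathbf{1}}$ for $m_{(1^n)}$). One cosmetic slip: the diagonal entry of your triangular system in Case B is not $2(n-\ell(\nu))$ --- that is the coefficient for the $\tilde m$-normalization from the proof of Lemma~\ref{th:4}a); after converting to the $m$-basis one finds $K_{\nu,\nu}=1$. This does not affect the logic, since all you use is $K_{\nu,\nu}\ne 0$, which is exactly Lemma~\ref{th:4}a).

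In Case B ($\la_n=0$) your route genuinely differs from the paper's. You strengthen the statement to include the top-degree dominance triangularity $c_{\la,\mu}=0$ for $\mu>\la$, then pull this back along the restriction $x_n=\tau_n$ via Proposition~\ref{ElementaryProp}a) and solve a triangular linear system relating the $n$-variable and $(n{-}1)$-variable top coefficients. The paper instead avoids triangularity entirely: it reverts to the explicit decomposition $R_\la(x;q,\tau)=g(x)+h(x)\prod_i(x_i-\tau_n)(x_i^{-1}-\tau_n)$ from the proof of Proposition~\ref{th:3}, where $g$ is chosen via the proof of Lemma~\ref{th:4}c) so that $m_\la$ already occurs in $g$ with nonzero coefficient, and then simply observes that in $\textup{Gr}_{n,d}^{W_n}$ the second summand contributes only to $[m_\mu]_d$ with $\mu_n>0$, hence cannot touch the $[m_\la]_d$-coefficient. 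The paper's argument is shorter and needs no auxiliary statement, at the price of reaching back into the constructive proofs of Lemma~\ref{th:4} and Proposition~\ref{th:3}; your argument is more self-contained and in fact proves more (the triangularity), at the cost of carrying the extra assertion through the induction.
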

%
\Proof
It suffices to show that
the coefficient of $[m_\la]_d$ in the expansion of $[R_\la]_d$ in
terms of the linear basis $\{[m_\mu]_d\mid \mu\in\widehat{\La}_{n,d}^+\}$
of $\mathcal{G}_{n,d}^{W_n}$ is nonzero. We prove this
 by induction on $n+d$.
For $n=1$ the result follows from Example \ref{voorbeeld}.
To prove the induction step we need to consider two cases.

If $\la_n>0$ then $d\geq n$ and 
\begin{equation}\label{Gridentity}
[R_\la]_d=\left(\prod_{i=1}^n\frac{\tau_n}{(\overline{\la}_i-\tau_n)
(\tau_n-\overline{\la}_i^{\,-1})}\right)
[R_{\la-\mathbf{1}}]_{d-n}[m_{\mathbf{1}}]_n
\end{equation}
in $\mathcal{G}(\FSP_n^{W_n})$ by Proposition \ref{th:6}{\bf b)}. The
result now immediately follows from the induction hypothesis.

If $\la_n=0$ then first consider $R_\la(x^\prime;q,\tau^\prime)$.
By the induction
hypothesis, $m_\la(x^\prime)$
occurs with nonzero coefficient in the linear expansion of
$R_\la(x^\prime;q,\tau^\prime)$ in the basis
$\{m_{\mu}(x^\prime)\mid\mu\in\La_{n-1,d}^+\}$ of $\FSP_{n-1,d}^{W_{n-1}}$.
By the Proof of
Lemma \ref{th:4}{\bf c)}, there exists a $g\in\FSP_{n,d}^{W_n}$ such that
\[ 
g(x^\prime,\tau_n)=R_\la(x^\prime;q,\tau^\prime)
\]
and such that $m_\la(x)$ occurs with nonzero coefficient in the linear
expansion of $g(x)$ in the
basis $\{m_{\mu}(x)\mid \mu\in\La_{n,d}^+\}$ of $\FSP_{n,d}^{W_{n}}$.
Fixing this choice of $g$, there exists by (the proof of)
Proposition~\ref{th:3} a unique $h\in\FSP_{n,d-n}^{W_n}$
such that
\begin{equation}\label{exp}
R_\la(x;q,\tau)=g(x)+h(x)\prod_{i=1}^n(x_i-\tau_n)(x_i^{-1}-\tau_n)
\end{equation}
in $\FSP_{n,d}^{W_n}$ (if $d<n$, then \eqref{exp} should be read as
$R_\la(x;q,\tau)=g(x)$ and the proof below goes through with the
obvious adjustments). Hence
\[
[R_\la]_d=[g]_d+(-\tau_n)^n[h]_{d-n}[m_{\mathbf{1}}]_n
\]
in $\mathcal{G}(\FSP_n^{W_n})$, and the result follows from the fact
that the linear expansion of
$[h]_{d-n}[m_{\mathbf{1}}]_n\in\mathcal{G}_{n,d}^{W_n}$ in the basis
$\{[m_\mu]_d\mid \mu\in\widehat{\La}_{n,d}^+\}$ of
$\mathcal{G}_{n,d}^{W_n}$ only involves the basis elements $[m_\mu]_d$
with $\mu_n>0$.
\qed
\bPP
\begin{example}\label{voorbeeld}
If $n=1$ then the interpolation parameter $\tau\in\FST_1$ is a complex
number $s\in\CC^*$ satisfying $s^2\not\in q^{\ZZ}$.  We denote the corresponding
symmetric interpolation Laurent polynomial $R_{(\ell)}(x;q,\tau)$ in one
variable $x$ by $R_\ell(x;q,s)$ ($\ell\in\ZZ_{\ge0}$). 
Then
\begin{equation}
R_\ell(x;q,s)=\frac{(sx,sx^{-1};q)_\ell}{(q^\ell s^2,q^{-\ell};q)_\ell}.
\label{eq:19}
\end{equation}
and the coefficient of $m_\ell(x)$ in the linear expansion
of $R_\ell(x;q,s)$ with respect to the basis
$\{m_k(x) \mid 0\leq k\leq\ell\}$ of $\FSP_{1,\ell}^+$ is 
\[
\frac{(-s)^{\ell}q^{\ell(\ell-1)/2}}
{\bigl(q^{\ell}s^2,q^{-\ell};q\bigr)_{\ell}}.
\]
\end{example}
\vspace{.3cm}
Write $\rho=(\rho_1,\ldots,\rho_n)$ with $\rho_i:=t^{n-i}$, and $st^\rho:=(st^{\rho_1},\ldots,st^{\rho_n})$ for $s,t\in\CC^*$.
 Note that $\rho+(1,\ldots,1)=\frac{1}{2}\sum_{\alpha\in R^+}\alpha$. Furthermore, $st^\rho\in\FST_n$ implies that $t\not\in q^{\ZZ\setminus\{0\}}$ 
and $s^2\not\in q^\ZZ$.

\begin{definition} (cf. \cite{O98}).
Let $s,t\in\CC^*$ such that $st^\rho\in\FST_n$. Then we call
\begin{equation*}
R_\la(x;q,s,t):=R_\la(x;q,st^\rho)
\end{equation*}
the $BC_n$-type interpolation Macdonald polynomial of degree $\la\in\La_n^+$.
\end{definition}
The specialization $\tau=st^\rho$ of the parameters $\tau$ is called the {\it principal specialization}.
By Definition \ref{th:5} and \eqref{eq:8}, 
$R_\la(x;q,s,t)$ is the $\textup{BC}_n$-symmetric Laurent
polynomial of degree $|\la|$ such that
\begin{equation}
R_\la(q^\mu st^\rho;q,s,t)=\de_{\la,\mu}\qquad
(\mu\in\La_n^+,\;|\mu|\le|\la|).
\label{eq:30}
\end{equation}
It is related to Okounkov's polynomial $P_\la^*(x;q,s,t)$ from \cite[Definition 1.1]{O98} by the formula
\begin{equation*}
R_\la(x;q,s,t)=\frac{P_\la^*(xt^{-\rho}s^{-1};q,t,s)}
{P_\la^*(q^\la;q,t,s)}\,.
\end{equation*}
For the $BC_n$-type interpolation Macdonald polynomials, part a) of Proposition \ref{ElementaryProp} is \cite[Prop. 2.2]{O98}, part b) of Proposition
\ref{ElementaryProp} is \cite[Prop. 2.1]{O98}, and Proposition \ref{th:8} is a special case of \cite[Cor. 5.4]{O98}.

The $BC_n$-type interpolation Macdonald polynomials form a distinguished class of $BC_n$-symmetric Laurent interpolation polynomials. They satisfy various special properties, such as the
{\em extra vanishing property} 
\begin{equation}
R_\la(q^\mu st^\rho;q,s,t)=0\quad
\mbox{if}
\quad\la\nsubseteq\mu,
\label{extravansym}
\end{equation}
and they admit explicit binomial, combinatorial and integral formulas, see \cite{O98,O98p2}. The combinatorial formula \cite[Thm. 5.2]{O98} allows to obtain more precise information on the expansion components of $R_\la(x;q,s,t)$ in symmetric monomials, while the binomial formula \cite[Thm. 7.1]{O98} provides the explicit expansion of Koornwinder polynomials in terms of $BC_n$-type interpolation Macdonald polynomials.
%
\begin{remark}\label{interpolationgrid}
The interpolation grid for the $BC_n$-type interpolation Macdonald polynomials naturally appears in the theory of Koornwinder polynomials in the following way.
Koornwinder polynomials are the $BC_n$-symmetric Laurent polynomial eigenfunctions of the commuting Koornwinder--van Diejen $q$-difference operators \cite{K92,vD95}, depending on five parameters $a,b,c,d,t$. 
These operators generate a commutative algebra isomorphic to $\FSP_n^{W_n}$ through the Harish-Chandra isomorphism (cf. \cite[\S 2]{LS}). Through this isomorphism, the eigenvalues of the Koornwinder-van Diejen $q$-difference operators are described by the evaluation morphisms $\FSP_n^{W_n}\rightarrow\CC$, $p\mapsto p(q^\la st^\rho)$ ($\la\in\La_n^+$), where $s=\sqrt{q^{-1}abcd}$.
\end{remark}
%
\section{Interpolation theorem for nonsymmetric Laurent
polynomials}
\label{sec:4}
We extend definition \eqref{eq:8} of the interpolation points
$\overline{\mu}$
from $\mu\in\La_n^+$ to $\mu\in\Lan$ as follows. Put
$\tau\in\FST_n$, with $\FST_n$ defined by \eqref{eq:9}. 
For $\al\in\Lan$ we define $\wb\al=(\wb\al_1,\ldots,\wb\al_n)\in\CC^n$
by
\begin{equation}
\wb\al_i:=q^{\al_i}\big(\tau_{\pi_\al^{-1}(i)}\big)^{\sgn(\al_i)}.
\label{eq:2}
\end{equation}
Here $\pi_\al$ is as in Lemma \ref{th:2}.
We write $\wb\al=\wb\al(q,\tau)$
and $\wb\al_i=\wb\al_i(q,\tau)$ if we 
need to emphasize the dependence of $\wb\al$ on the parameters.

 Recall the actions  \eqref{eq:20}, \eqref{eq:21}, \eqref{eq:22} of $W_n$ 
on $\Lan$, $(\CC^*)^n$ and $\FSP_n$, respectively.
The resulting
action of $W_n$ on the interpolation points
$\wb\al\in (\CC^*)^n$
can be described as follows.
%
\begin{lemma}\label{th:9}
Let $\al\in\Lan$.
\alphlist
\item 
Let $j\in [1,n]$. If $s_j\al\neq\al$ then $\bm{s_j}\wb\al=\wb{s_j\al}$. 
\item
If $j\in [1,n)$ and 
$\al_j=\al_{j+1}$ 
then
\begin{equation}\label{explicitequal}
\wb\al_j/\wb\al_{j+1}=
\bigl(\tau_{\pi_\al^{-1}(j)}/\tau_{\pi_{\al}^{-1}(j)+\sgn(\al_j)}\bigr)^{
\sgn(\al_j)}.
\end{equation}
\item
If 
$\al_n=0$ then $\wb\al_n=\tau_n$.
\end{list}
\end{lemma}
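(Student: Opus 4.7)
The plan is to verify each of the three statements by substituting directly into the defining formula \eqref{eq:2} of $\wb\al$, which in each case reduces the problem to understanding how the permutation $\pi_\al$ behaves under the operation being applied; for this I rely entirely on the explicit description of $\pi_\al$ furnished by Lemma \ref{th:2}.

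For part (a), the crux is two permutation identities that I would establish first: $\pi_{s_j\al} = s_j\circ\pi_\al$ when $1\le j<n$ and $\al_j\ne\al_{j+1}$, and $\pi_{s_n\al} = \pi_\al$ when $\al_n\ne 0$. For the first identity I would use the alternative characterization \eqref{altdef} and check that $\pi_\al^{-1}\circ s_j$ satisfies the defining biconditional for $\pi_{s_j\al}^{-1}$. If $i<k$ and $\{i,k\}$ meets $\{j,j+1\}$ in at most one element, then $s_j(i)<s_j(k)$, the values $(\al_{s_j(i)},\al_{s_j(k)})$ agree with $(\be_i,\be_k)$ for $\be:=s_j\al$, and the altdef biconditional for $\al$ at $(s_j(i),s_j(k))$ immediately implies the one for $\be$ at $(i,k)$. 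The only nontrivial pair is $(i,k)=(j,j+1)$, where $s_j$ reverses the order, and a short case split (on whether $|\al_j|\ne|\al_{j+1}|$, or $\al_j=-\al_{j+1}\ne 0$) shows that exactly one of the two altdef conditions holds for $\al$ versus $\be$, as required. For the $s_n$-identity, the point is that flipping the sign of $\al_n$ only rearranges the block of indices $i$ with $|\al_i|=|\al_n|$: index $n$ migrates from the end of the nonnegative sub-list (increasing-order rule) to the beginning of the negative sub-list (decreasing-order rule), and since $n$ is the maximal index these two positions coincide, so $\pi_\al^{-1}(n)$ is unchanged. With these identities in hand, substitution into \eqref{eq:2} coordinate by coordinate, using $\sgn(-\al_n)=-\sgn(\al_n)$ for $\al_n\ne 0$, yields $\wb{s_j\al}=\bm{s_j}\wb\al$ in both cases.

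For part (b), rule {\bf b} in Lemma \ref{th:2} tells us that when $\al_j=\al_{j+1}$ the indices $j$ and $j+1$ lie adjacently in the same sub-list of their common block, so $\pi_\al^{-1}(j+1)=\pi_\al^{-1}(j)+1$ when $\al_j\ge 0$ (increasing-order rule) and $\pi_\al^{-1}(j+1)=\pi_\al^{-1}(j)-1$ when $\al_j<0$ (decreasing-order rule); in both cases this equals $\pi_\al^{-1}(j)+\sgn(\al_j)$. Substituting into \eqref{eq:2}, together with $\sgn(\al_j)=\sgn(\al_{j+1})$, the factor $q^{\al_j}$ cancels in the ratio and one reads off \eqref{explicitequal} directly. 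For part (c), the condition $\al_n=0$ places $n$ in the zero-block, which under the convention $\sgn(0)=1$ is treated as entirely nonnegative and listed in increasing order of index; since $n$ is the largest, it occupies the last position of that block, so $\pi_\al^{-1}(n)=n$, whence \eqref{eq:2} gives $\wb\al_n=q^0\tau_n^1=\tau_n$.

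The main obstacle is the permutation identity $\pi_{s_j\al}=s_j\circ\pi_\al$ for $1\le j<n$ in part (a); once it is established (via \eqref{altdef} as above), the remaining identities and substitutions are essentially bookkeeping.
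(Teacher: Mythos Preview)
Your proof is correct and reaches the same key permutation identities as the paper, namely $\pi_{s_j\al}=s_j\pi_\al$ for $j<n$ and $\pi_{s_n\al}=\pi_\al$, but you obtain them by a different and more elementary route.

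The paper derives these identities abstractly: it invokes Deodhar's lemma to conclude that $s_jw_\al=w_{s_j\al}$ whenever $s_j\al\neq\al$, and then reads off the permutation identities from the uniqueness of the semidirect-product decomposition $w=\si\pi$ in $W_n=\{\pm1\}^n\rtimes S_n$. This is concise and conceptually clean, but relies on external Coxeter-group machinery. You instead verify the identities directly from the combinatorial characterisation \eqref{altdef} of $\pi_\al$, by the pair-by-pair case check for $j<n$ and by the block-migration argument for $j=n$. This is longer but entirely self-contained, and it makes transparent exactly why the assumption $s_j\al\neq\al$ is needed (in your treatment it is what rules out the case $\al_j=\al_{j+1}$, where the biconditional at the pair $(j,j+1)$ would fail to flip). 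Parts (b) and (c) of your argument are essentially identical to the paper's.

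One small point of exposition: in your $j=n$ argument you explicitly conclude only that $\pi_\al^{-1}(n)$ is unchanged, but the substitution into \eqref{eq:2} for the coordinates $i<n$ requires $\pi_{s_n\al}^{-1}(i)=\pi_\al^{-1}(i)$ for all $i$. Your block-migration picture actually gives this (when index $n$ moves from the end of the nonnegative sub-list to the start of the negative sub-list, it lands in the same slot and no other index shifts), but you should say so. Alternatively, you can get $\pi_{s_n\al}=\pi_\al$ in one line from \eqref{altdef}: since $|\be_i|=|\al_i|$ for all $i$ and the condition ``$0\le\al_i=\pm\al_k$'' with $k=n$ depends only on $|\al_n|$ and the sign of $\al_i$, the biconditional is identical for $\al$ and $\be=s_n\al$.
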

%
\begin{remark}
In case of the special specialisation $\tau=st^{^\rho}$ part a) of Lemma \ref{th:9} was observed by Sahi \cite[Proof of Thm.~5.3]{Sahi99}, and
parts b) and c) were observed in \cite[Remark 4.7]{St}.
\end{remark}
\noindent
{\bf Proof of Lemma \ref{th:9}.}\quad
First we prove a).
For $j=n$ this reduces by \eqref{eq:2} and the assumption $s_n\al\ne\al$ to
showing that
$\pi_{s_n\al}^{-1}=\pi_\al^{-1}$.
It follows immediately from Lemma \ref{th:2}
that these two permutations are equal.
For $j<n$ the statement of a) reduces by \eqref{eq:2} to showing that
$\pi_{s_j\al}^{-1}=\pi_\al^{-1}\circ s_j$ if $s_j\al\ne\al$.
Also the equality of these two permutations under the given condition
follows immediately from Lemma \ref{th:2}.

Similarly the proofs of b) and c) reduce by \eqref{eq:2} to respectively
showing that
$\pi_\al^{-1}(j+1)=\pi_\al^{-1}(j)+\sgn\al_j$ and
$\pi_\al^{-1}(n)=n$. Both statements immediately follow from Lemma \ref{th:2}
taking into account the assumption.

\begin{corollary}\label{cor:9}
Let $\tau\in\FST_n$. \\
{\bf a)} For all $\al\in\Lan$ we have $\wb\al=\bm{w_\al}\wb{\al^+}$.\\
{\bf b)} The map $\Lan\rightarrow (\CC^*)^n$, $\al\mapsto \wb\al$,
is injective.\\
{\bf c)} We have $\wb\al^{\be^j}\not=1$ for $j\in [1,n]$ and $\al\in\Lan$ such that $s_j\al\not=\al$.
\end{corollary}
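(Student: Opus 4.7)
The plan is to prove parts a), b), c) in sequence, with each resting on the tools developed in the preceding text.

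For part a), I proceed by induction on the length $\ell(w_\al)$ using Lemma \ref{th:9}a). Fix a reduced expression $w_\al=s_{i_1}\cdots s_{i_r}$ and set $\al^{(k)}:=s_{i_k}s_{i_{k+1}}\cdots s_{i_r}\al^+$, so that $\al^{(r+1)}=\al^+$ and $\al^{(1)}=\al$. At each step $s_{i_k}\al^{(k+1)}\neq\al^{(k+1)}$, since otherwise deleting the factor $s_{i_k}$ would yield a shorter element of $W_n$ still mapping $\al^+$ to $\al$, contradicting the minimality of $w_\al$. Hence Lemma \ref{th:9}a) gives $\bm{s_{i_k}}\wb{\al^{(k+1)}}=\wb{\al^{(k)}}$; iterating from $k=r$ down to $k=1$ yields $\bm{w_\al}\wb{\al^+}=\wb\al$.

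For part b), suppose $\wb\al=\wb\be$. Since $\tau\in\FST_n$ and $|q|<1$, formula \eqref{eq:2} shows that $|\wb\al_i|<1$ precisely when $\al_i\geq 0$, so the sign pattern of $\al$, and hence $\si_\al$, can be read off from $\wb\al$ and must coincide with that of $\be$. By part a) we have $\bm{\si_\al}\wb\al=\bm{\pi_\al}\wb{\al^+}$, a permutation of the coordinates of $\wb{\al^+}$. The crucial observation is that $|\wb{\al^+}_i|=|q|^{\al^+_i}|\tau_i|$ is strictly increasing in $i$: the partition condition $\al^+_i\geq\al^+_{i+1}$ combined with $|q|<1$ gives $|q|^{\al^+_i}\leq|q|^{\al^+_{i+1}}$, while $|\tau_i|<|\tau_{i+1}|$ strictly. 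Comparing sorted moduli on both sides of $\bm{\si_\al}\wb\al=\bm{\si_\be}\wb\be$ yields $|\wb{\al^+}_i|=|\wb{\be^+}_i|$ for all $i$, whence $\al^+=\be^+$. Then $\bm{\pi_\be^{-1}\pi_\al}\wb{\al^+}=\wb{\al^+}$, and since the entries of $\wb{\al^+}$ are pairwise distinct, this forces $\pi_\al=\pi_\be$, so $\al=\be$.

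For part c), part a) gives $\wb\al_i=\wb{\al^+}_{\pi_\al^{-1}(i)}^{\sgn(\al_i)}$. The roots of $R$ fall into the three families $\pm e_i$, $\pm(e_i-e_j)$ ($i\neq j$), and $\pm(e_i+e_j)$ ($i\neq j$), and correspondingly $\wb\al^{\,\be}$ is a power of a single $\wb\al_k$, a ratio $\wb\al_i/\wb\al_j$, or a product $\wb\al_i\wb\al_j$. In the first family $|\wb\al_k|$ is either $<1$ (when $\al_k\geq 0$) or $>1$ (when $\al_k<0$), so $\neq 1$. In the other two families a brief case analysis on the signs $(\sgn(\al_i),\sgn(\al_j))$ reduces matters to two facts already in hand: each $|\wb{\al^+}_k|$ lies strictly in $(0,1)$, and by the monotonicity from b) the values $|\wb{\al^+}_k|$ are pairwise distinct. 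In every subcase at least one strict inequality survives in the product or ratio, giving $|\wb\al^{\,\be}|\neq 1$.

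The principal obstacle is the sorting step in b): one must check that the strict monotonicity of $|\wb{\al^+}_i|$ suffices to determine both $\al^+$ and $\pi_\al$ uniquely. This monotonicity is a clean consequence of $\tau\in\FST_n$, the partition condition $\al^+_i\geq\al^+_{i+1}$, and $|q|<1$, and it is the combinatorial engine driving parts b) and c) alike.
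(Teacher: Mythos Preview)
Your proof is correct and follows essentially the same strategy as the paper's: part a) by induction along a reduced expression of $w_\al$ using Lemma~\ref{th:9}a), part b) by reducing via part a) to the strict monotonicity $0<|\wb{\al^+}_1|<\cdots<|\wb{\al^+}_n|<1$ (which the paper records as \eqref{section}), and part c) from these same inequalities. The only notable difference is in part c): the paper observes that $\wb\al^{\,\be}=(\bm{w_\al}\wb{\al^+})^{\be}=\wb{\al^+}^{\,w_\al^{-1}\be}$ with $w_\al^{-1}\be\in R$, so one may assume $\al\in\La_n^+$ and the result is then immediate from \eqref{section} without any sign-pattern case analysis; your direct case analysis reaches the same conclusion but is a bit more laborious.
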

\Proof
{\bf a)} By Definition \ref{def:2.1}, equation \eqref{eq:8},
Lemma \ref{th:2}a) and equation \eqref{eq:2} we have
\begin{align*}
&(\bm{w_\al} \overline{\al^+})_i=(\bm{\si_\al}\pi_\al\overline{\al^+})_i
=\big(\pi_\al\overline{\al^+}\,\big)_i^{\sgn\al_i}
=\big(\overline{\al^+}\,\big)_{\pi_\al^{-1}(i)}^{\sgn\al_i}\\
&=\big(q^{\al_{\pi_\al^{-1}(i)}}\tau_{\pi_\al^{-1}(i)}\big)^{\sgn\al_i}
=\big(q^{|\al_i|}\big)^{\sgn\al_i}\big(\tau_{\pi_\al^{-1}(i)}\big)^{\sgn\al_i}
=q^{\al_i}\big(\tau_{\pi_\al^{-1}(i)}\big)^{\sgn\al_i}
=\overline\al_i\,.
\end{align*}
{\bf b)} This follows from the explicit expression \eqref{eq:2} using the fact
that $q$ is not a root of unity and that $\tau\in\FST_n$ (see \eqref{eq:9}).\\
{\bf c)}
By part b) of the Corollary and Lemma \ref{th:9}a) we have for $s_j\al\not=\al$
that ${\mathbf{s}_j}\wb\al=\wb{s_j\al}\ne\wb\al$.
Hence $\wb\al^{\be^j}=\wb\al_j/\wb\al_{j+1}\ne 1$
for $j<n$ and $\wb\al^{\be^j}=\wb\al_n^2\ne 1$ for $j=n$.\qed
\bPP
For $d\in\ZZ_{\ge0}$ and $I=\{i_1,\ldots,i_k\}\subseteq[1,n]$
put
\begin{align*}
\La_{n,d}&:=\{\mu\in\Lan\mid |\mu|\le d\},\\
R(n,d,I)&:=\{\al\in \La_{n,d}\mid \al_j\ne0\;
\mbox{for all $j$ and $\al_j\ne-1$ if
$j\in I^{\mathsf{c}}$}\},\\
T(n,d,I)&:=\{\al\in \La_{n,d}\mid \al_j\ne0\;\mbox{if $j\in I^{\mathsf{c}}$}\}.
\end{align*}
Note that
\begin{equation*}
R\big(n,d,[1,n]\big)=T(n,d,\emptyset),\qquad
T\big(n,d,[1,n]\big)=\La_{n,d}.
\end{equation*}
Furthermore,
\begin{equation*}
R(n,d,I)=\emptyset\quad\mbox{if}\quad d-n<0;\qquad
T(n,d,I)=\emptyset\quad\mbox{if}\quad d-n+|I|<0.
\end{equation*}
%
\begin{proposition}
\label{th:1}
Let $n\in\ZZ_{>0}$, $d\in\ZZ_{\ge0}$, $\tau\in\FST_n$ and suppose that $I=\{i_1,\ldots,i_k\}\subseteq [1,n]$ is a set of cardinality $k$.
\alphlist
\item
For every map $\wb f\colon R(n,d,I)\to\CC$
there exists a Laurent polynomial $f\in\FSP_n$ such that

\begin{equation}\label{Inda}
\begin{split}
f(\wb \al(q,\tau))&=\wb f(\al)\qquad\qquad \forall\,\al\in R(n,d,I),\\
\deg\bigl(x_Jf(x)\bigr)&\leq d-n+k\qquad\forall\, J\subseteq I.
\end{split}
\end{equation}
\item
For every map $\wb f\colon T(n,d,I)\to\CC$
there exists a Laurent polynomial $f\in\FSP_n$ such that

\begin{equation}\label{Indb}
\begin{split}
f(\wb \al (q,\tau))&=\wb f(\al)\qquad\qquad \forall\, \al\in T(n,d,I),\\
\deg\bigl(x_Jf(x)\bigr)&\leq d\qquad\qquad\quad\,\,\, \forall J\subseteq I^c.
\end{split}
\end{equation}
\end{list}
\end{proposition}
\begin{remark}\label{remspecialcases}
Note that statement~b) for $I=\emptyset$ is statement~a) for $I=[1,n]$.
\end{remark}
\noindent {\bf Proof of Proposition \ref{th:1}.}\quad
If $\wb f$ is a map on an empty set then choose $f$ identically zero.
Thus statement a) holds trivially when $d<n$ and statement b) holds
trivially when $d<n-k$.

If $(n,d)=(1,0)$ then statement a), and statement b) for
$I=\emptyset$, are true by the remark in the previous paragraph. For
statement b) with $I=\{1\}$ note that $T(1,0,\{1\})=\{0\}$,
hence we can take $f(x)$ to be the constant polynomial $\overline{f}(0)$. 

Now let $n+d\geq 2$.
Suppose that all the statements of the Proposition, with
$(n,d)$ replaced by $(\widetilde{n},\widetilde{d})$, are true for all subsets $I\subset [1,\widetilde{n}]$ and all $\tau\in\FST_{\widetilde{n}}$
when $\widetilde{n}+\widetilde{d}<n+d$. We will
then successively prove statements a) and b) by induction on the
cardinality $|I|$ of the subset $I\subseteq [1,n]$.
\paragraph{Proof of statement a).}
We may assume that $d\ge n$. 

First consider the case $I=\emptyset$. Note that
\[
R(n,d,\emptyset)=\{\al\in \La_{n,d}\mid\al_j\ne0,-1\;\mbox{for all $j$}\}.
\]
Fix a map $\wb f: R(n,d,\emptyset)\rightarrow\CC$ and $\tau\in\FST_n$.
We prove the existence of a Laurent polynomial $f\in\FSP_{n,d-n}$ such that 
$f(\overline{\al}(q,\tau))=f(\al)$ for all $\al\in R(n,d,\emptyset)$
by solving a related interpolation problem
on the set $T(n,d-n,[1,n])=\La_{n,d-n}$, using statement b) with shifted parameters $q\tau\in\FST_n$.

Consider for this the bijection 
\begin{equation}\label{bij}
 \al\mapsto\be\colon R(n,d,\emptyset)\overset{\sim}{\longrightarrow} T(n,d-n,[1,n]),
 \qquad
\be_i:=\al_i-\sgn(\al_i).
\end{equation}
Note that $\sgn(\be_i)=\sgn(\al_i)$ and, using \eqref{altdef},
$\pi_\be=\pi_\al$.
Hence, by
\eqref{eq:2},
$\wb\al(q,\tau)=\wb\be(q,q\tau)$. By the induction hypothesis, statement~b) with $\tau$ replaced by $q\tau$ is valid for the function $\be\mapsto\overline{f}(\al)$ ($\be\in T(n,d-n,[1,n])$).  Hence there exists a
$f\in\FSP_{n,d-n}$ such that $f(\wb\be(q,q\tau))=\overline{f}(\al)$
for all $\be\in T(n,d-n,[1,n])$.  But $f\in\FSP_{n,d-n}$ then also
satisfies $f(\overline{\al}(q,\tau))=f(\al)$ for all
$\al\in R(n,d,\emptyset)$, which completes the proof of statement a) for $I=\emptyset$.

Now let $k>0$ and assume that statement a) is true for 
all functions $\wb f^{\,\vee}\colon
R(n^\vee,d^\vee,I^\vee)\rightarrow \CC$ when
$n^\vee+d^\vee\leq n+d$, $\tau^\vee\in\FST_{n^\vee}$ and with
$I^\vee\subseteq [1,n^\vee]$ of cardinality $<k$.
(Note that for $n^\vee+d^\vee<n+d$ this assumption already holds by our
earlier induction hypothesis.)
Let $I=\{i_1,\ldots,i_k\}\subseteq [1,n]$ be a set of cardinality $k$,
$\tau\in\FST_n$,
and consider a function $\wb f\colon R(n,d,I)\rightarrow\CC$. We prove
the existence of an interpolation Laurent polynomial $f\in\FSP_n$ 
satisfying \eqref{Inda}
by splitting the interpolation problem in two pieces. For this we use
the disjoint union
\[
R(n,d,I)=R^\vee(n,d,I)\sqcup R(n,d,I\backslash\{i_1\})
\]
with
\[
R^\vee(n,d,I):=\{\al\in R(n,d,I)\mid\al_{i_1}=-1\}.
\]

The first step is to prove the existence of a Laurent polynomial $g\in\FSP_n$ such that
\begin{equation}\label{Indc}
\begin{split}
g(\wb\al(q,\tau))&=\wb f(\al)\qquad\qquad\qquad\forall\, \al\in R^\vee(n,d,I),\\
\deg\bigl(g(x)x_K)&\leq d-n+k-1\qquad\, \forall\, K\subseteq I\setminus\{i_1\}.
\end{split}
\end{equation}
For $n=1$ we have $d\geq 1$ and $I=\{1\}$, hence
$R^\vee(1,d,\{1\})=\{-1\}$. In this case we can take $g(x)$ to be
the constant polynomial $\overline{f}(-1)$. Assume that $n>1$. In this
case we solve the interpolation problem \eqref{Indc}
by rewriting it as  
an interpolation problem for a function on
$R(n-1,d-1,J)$ with $J:=\{i_2-1,i_3-1,\ldots,i_k-1\}\subseteq [1,n-1]$.

Consider the bijection
\[
\al\mapsto\ga\colon R^\vee(n,d,I)\overset{\sim}{\longrightarrow} R(n-1,d-1,J)
\]
with
$\ga:=\al^{(i_1)}=(\al_1,\ldots,\al_{i_1-1},\al_{i_1+1},\ldots,\al_n)$. In
other words, $\ga_i=\al_{i^\vee}$ for $i\in[1,n)$ with $i^\vee:=i$
if $i<i_1$ and $i^\vee:=i+1$ if $i_1\le i<n$. 

The interpolation points behave under this bijection in the following manner.
By the explicit description of $\pi_\al$ (see Lemma \ref{th:2}) we have
$\pi_\al^{-1}(i_1)=n$ and $\pi_\al^{-1}(i^\vee)=\pi_\ga^{-1}(i)$
($i\in [1,n)$) for $\al\in R^\vee(n,d,I)$.
Then, by \eqref{eq:2}, we have
\begin{equation}\label{hulp}
\begin{split}
\wb\al_{i^\vee}(q,\tau)&=\wb\ga_i(q,\tau')\quad (1\leq i<n),\\
\wb\al_{i_1}(q,\tau)&=q^{-1}\tau_n^{-1}
\end{split}
\end{equation}
for $\al\in R^\vee(n,d,I)$.

Consider the function $\overline{g}^{\vee}: R(n-1,d-1,J)\rightarrow\mathbb{C}$, defined by $\overline{g}^{\vee}(\ga):=\overline{f}(\al)$.
Since $|J|<k$ the induction hypothesis
(either  the one on the sum of the number of variables and
the weight, or the one on the size of the subset)
implies that statement a) is true for $\overline{g}^\vee$ and $\tau^\prime\in\FST_{n-1}$. Hence there exists a Laurent polynomial
$g^\vee\in\FSP_{n-1}$ such that
$g^\vee(\wb\ga(q,\tau'))=\wb f(\al)$ for all $\ga\in R(n-1,d-1,J)$,
satisfying the degree conditions
$\deg\bigl(g^\vee(x')x_K\bigr)\leq d-n+k-1$ for all $K\subseteq J$.
Define $g\in\FSP_n$ by $g(x):=g^\vee(x^{(i_1)})$, then it follows
that $g$ satisfies \eqref{Indc}. This completes the first step.

As a second step we add an appropriate term to $g$ to obtain the desired interpolation properties for the full set $R(n,d,I)$.
Note that Laurent polynomials $f$ of the form
\begin{equation}
f(x)=g(x)+(x_{i_1}^{-1}-q\tau_n)h(x)
\label{eq:3}
\end{equation}
with $h\in\FSP_n$ all satisfy
$f(\wb\al(q,\tau))=\wb f(\al)$ 
for $\al\in R^\vee(n,d,I)$ in view of \eqref{hulp}. The interpolation property $f(\wb\al(q,\tau))=\wb f(\al)$
is then also satisfied for
$\al \in R(n,d,I\backslash\{i_1\})$ if
\begin{equation}\label{hinter}
h(\wb\al(q,\tau))=
\frac{\wb f(\al)-g(\wb\al(q,\tau))}{\wb\al_{i_1}(q,\tau)^{-1}-q\tau_n}\qquad
\big(\al \in R(n,d,I\backslash\{i_1\})\big).
\end{equation}
Note that the right-hand side is well defined, since the conditions
\eqref{eq:9}
on the parameters together with \eqref{eq:2} and the fact that $\al_{i_1}\ne-1$
imply that the denominator is nonzero.

Due to the induction hypothesis, we are allowed to apply statement a)
to the function
\[
\overline{h}(\al):=
\frac{\wb f(\al)-g(\wb\al(q,\tau))}{\wb\al_{i_1}(q,\tau)^{-1}-q\tau_n}\qquad
\big(\al \in R(n,d,I\backslash\{i_1\})\big).
\]
This gives a Laurent polynomial $h\in\FSP_n$ fulfilling \eqref{hinter} and
satisfying the degree conditions $\deg\bigl(h(x)x_K\bigr)\leq d-n+k-1$
for all $K\subseteq I\setminus\{i_1\}$.  Then $f$ given by
\eqref{eq:3} satisfies $f(\wb\al(q,\tau))=\wb f(\al)$ for all $\al\in
R(n,d,I)$. Furthermore, by the degree properties of $g$ and $h$ the
degree conditions $\deg(f(x)x_J)\leq d-n+k$ for all $J\subseteq I$ are
satisfied. Hence $f$ satisfies \eqref{Inda}, as desired.
\paragraph{Proof of statement b).}
The proof is along the same lines as the proof of statement a), but there are 
subtle differences in the combinatorics.

We may assume that $d\ge n-k$.
If $I=\emptyset$ then the statement is correct due to Remark \ref{remspecialcases}.

Let $k>0$ and assume that statement b) is true
for all functions
$\wb f^{\,\wedge}\colon R(n^\wedge,d^\wedge,I^\wedge)\rightarrow\CC$ when
$n^\wedge+d^\wedge\leq n+d$, $\tau^\wedge\in\FST_{n^\wedge}$ and
$I^\wedge\subseteq [1,n^\wedge]$ is a set of cardinality $<k$.
(Note that for $n^\vee+d^\vee<n+d$ this assumption already holds by our
earlier induction hypothesis.)
Let $I=\{i_1,\ldots,i_k\}\subseteq [1,n]$ be a set of cardinality $k$
and consider a function $\wb f\colon T(n,d,I)\rightarrow\CC$. 
We have to prove the existence of
an interpolation Laurent polynomial $f\in\FSP_n$ satisfying \eqref{Indb}.

Consider this time
the decomposition 
\[
T(n,d,I)=T^\wedge(n,d,I)\sqcup T(n,d,I\backslash\{i_k\})
\]
with
\[
T^\wedge(n,d,I):=\{\al\in T(n,d,I)\mid\al_{i_k}=0\}.
\]
We claim that there exists a 
$g\in\FSP_n$ such that
\begin{equation}\label{Indd}
\begin{split}
g(\wb\al(q,\tau))&=\wb f(\al)\qquad\,\,\, \forall\,\al\in T^\wedge(n,d,I),\\
\deg\bigl(g(x)x_K\bigr)&\leq d\qquad\qquad \forall\, K\subseteq I^c.
\end{split}
\end{equation}
For $n=1$ we have $d\geq 1$ and $I=\{1\}$, hence
$T^\wedge(1,d,\{1\})=\{0\}$ and we can take $g(x)$ to be the constant
polynomial equal to $\wb f(0)$. For $n>1$ consider the
bijection
\[
\al\mapsto\de\colon T^\wedge(n,d,I)\overset{\sim}{\longrightarrow} T(n-1,d,I\setminus\{i_k\})
\]
with $\de:=\al^{(i_k)}$.
In other words, $\de_i:=\al_{i^\wedge}$ ($i\in [1,n)$) with $i^\wedge$
defined by $i^\wedge:=i$ if $i<i_k$ and $i^\wedge:=i+1$ if
$i_k\leq i<n$. As in the proof of assumption a) one then shows that
\begin{equation}\label{hulp2}
\begin{split}
\wb\al_{i^\wedge}(q,\tau)&=\wb\de_{i}(q,\tau^\prime)\qquad i\in [1,n),\\
\wb\al_{i_k}(q,\tau)&=\tau_n
\end{split}
\end{equation}
for $\al\in T^\wedge(n,d,I)$. 
By the induction hypothesis (either
the induction hypothesis on the sum of the number of variables and
the weight, or the induction hypothesis on the size of the
subset), there exists a $g^\wedge\in\FSP_n$ such that
$g^\wedge(\wb\de(q,\tau'))=\wb f(\al)$ for
$\de\in T(n-1,d,I\setminus\{i_k\})$ and satisfying the degree conditions
$\deg\bigl(g^\wedge(x')x_K\bigr)\le d$ for all
$K\subseteq(I^c\cup\{i_k\})\cap [1,n-1]$.
Then $g\in\FSP_n$, defined by $g(x):=g^\wedge(x^{(i_k)})$, 
satisfies \eqref{Indd}.
 
Now define $\wb h\colon T(n,d,I\setminus\{i_k\})\rightarrow\CC$ by
\[
\wb h(\al):=
\frac{\wb f(\al)-g(\wb\al(q,\tau))}{\wb\al_{i_k}(q,\tau)-\tau_n}\qquad
(\al\in T(n,d,I\setminus\{i_k\})).
\]
Note that the right-hand side is well defined, since the
conditions \eqref{eq:9} on the parameters together with \eqref{eq:2}
and the fact that $\al_{i_k}\ne0$ imply that the denominator is nonzero.
By the induction hypothesis, there exists a $h\in\FSP_n$ such that
$h(\wb\al(q,\tau))= \wb h(\al)$ for all
$\al\in T(n,d,I\setminus\{i_k\})$ which satisfies the degree conditions
$\deg\bigl(h(x)x_K\bigr)\leq d$ for all $K\subseteq
I^c\cup\{i_k\}$. Furthermore, with this choice of $h$ and \eqref{hulp2} it is clear that
\[
f(x):=g(x)+(x_{i_k}-\tau_n)h(x)
\]
satisfies the desired interpolation property
$f(\wb\al(q,\tau))=f(\al)$ for all $\al\in T(n,d,I)$.
By the degree conditions on $g(x)$ and $h(x)$, we have
$\deg\bigl(f(x)x_J\bigr)\leq d$ for all $J\subseteq I^c$, which
completes the proof of statement b).
\qed
%
\begin{theorem}
\label{th:7}
Let $\tau\in\FST_n$.
For every map $\wb f\colon \La_{n,d}\to\CC$
there exists a unique Laurent polynomial $f\in\FSP_{n,d}$
such that $f(\wb \al(q,\tau))=\wb f(\al)$ for all $\al\in \La_{n,d}$.
\end{theorem}
%
\Proof
Denote by $\mathcal{F}_{n,d}^{q,\tau}$ the space of complex-valued functions on $\mathcal{S}_{n,d}^{q,\tau}:=\{\wb\al(q,\tau)\mid\al\in \La_{n,d}\}$.
Then, by Corollary \ref{cor:9}b), $\mathcal{F}_{n,d}^{q,\tau}$ has dimension
$|\La_{n,d}|$.
Define the linear map
$\phi_{n,d}^{q,\tau}: \FSP_{n,d}\rightarrow\mathcal{F}_{n,d}^{q,\tau}$ by
\[
\phi_{n,d}^{q,\tau}(f):=f\vert_{\mathcal{S}_{n,d}^{q,\tau}}.
\]
Proposition \ref{th:1}b) with $I=[1,n]$ implies that $\phi_{n,d}^{q,\tau}$ is
surjective. Then $\phi_{n,d}^{q,\tau}$ is also injective, since both vector
spaces $\FSP_{n,d}$ and $\mathcal{F}_{n,d}^{q,\tau}$ are of dimension
$|\La_{n,d}|$.
Hence $\phi_{n,d}^{q,\tau}$ is a linear isomorphism, which implies the
theorem.\qed
\bPP
In the remainder of this section we fix $\tau\in\FST_n$ and write $\overline{\al}:=\overline{\al}(q,\tau)$ for $\al\in\La_n$.
In view of Theorem \ref{th:7}, we can give the following
definition.
%
\begin{definition}
\label{def:5}
The (nonsymmetric) interpolation Laurent polynomial of
degree $\al\in\La_n$ is the unique Laurent polynomial
 $G_\al(x;q,\tau)$ 
 in $n$ variables
$x$ of degree at most $|\al|$
such that $G_\al(\wb\al;q,\tau)=1$ and

\begin{equation*}
G_\al(\wb\be;q,\tau)=0\qquad \forall\,\be\in\La_{n,|\al|}\setminus\{\alpha\}.
\end{equation*}
\end{definition}
Theorem \ref{th:7} implies that
$\{G_\al(x;q,\tau)\mid\al\in\La_{n,d}\}$ is a linear basis of $\FSP_{n,d}$.
\begin{example}\label{voorbeeld2}
Recall from Example \ref{voorbeeld} that for $n=1$ the interpolation parameter $\tau\in\FST_1$ is
given by a complex number $s$ satisfying $s^2\not\in q^\ZZ$.
We write $G_{(\ell)}(x;q,\tau)$
for $\ell\in\ZZ$ by $G_\ell(x;q,s)$. 
Then the Laurent polynomial $G_\ell(x;q,s)$ has degree at most $|\ell|$
and is
characterized by the equations $G_\ell(q^k s^{\sgn(k)};q,s)=\de_{\ell,k}$
for $k\in\ZZ$ with $|k|\leq |\ell|$. It follows that
\begin{equation}
\begin{split}
G_\ell(x;q,s)&=\frac{\big(qsx,sx^{-1};q\big)_\ell}{\big(q^{1+\ell}s^2,q^{-\ell};q\big)_\ell},
\qquad\qquad\qquad\quad \ell\in\ZZ_{\geq 0},\\
G_{-\ell}(x;q,s)&=\frac{q^\ell sx\big(qsx;q\big)_{\ell-1}\big(sx^{-1};q\big)_{\ell+1}}
{\big(q^\ell s^2;q\big)_{\ell+1}\big(q^{1-\ell};q\big)_{\ell-1}},\qquad  \ell\in\ZZ_{>0}.
\end{split}
\label{eq:17}
\end{equation}
Furthermore, for $n>1$,
\[
G_\al(x;q,\mathbf{s}):=\prod_{i=1}^nG_{\al_i}(x_i;q,s)\qquad (\al\in\La_n)
\]
with (recall) $\mathbf{s}=(s,\ldots,s)\in\FST_n$.
\end{example}

As in Section \ref{sec:3}, one concludes from Theorem \ref{th:7}
that $\{[G_\al]_d\mid \al\in\widehat{\La}_{n,d}\}$ is a linear basis of
$\mathcal{G}_{n,d}$, where
$\widehat{\La}_{n,d}:=\La_{n,d}\setminus\La_{n,d-1}$ and $\La_{n,-1}:=\emptyset$.
In particular,
\[
\deg\bigl(G_\al(x;q,\tau)\bigr)=|\al|\qquad (\al\in\La_n).
\]
Recall from Proposition \ref{th:8} that the coefficient of $m_\la(x)$ in the linear expansion of the symmetric interpolation Laurent polynomial 
$R_\la(x;q,\tau)$ in symmetric monomials $m_\mu(x)$ ($\mu\in\La_n^+$) is nonzero. For the nonsymmetric interpolation Laurent polynomial $G_\al(x;q,\tau)$ we have the
following result.
%
\begin{lemma}\label{lem:rf}
Let 
$\al\in\Lan$. The coefficients $c^\al_\ga(q,\tau)\in\CC$ in the linear expansion
\begin{equation}
G_\al(x;q,\tau)=\sum_{\ga\in \La_{n,|\al|}}c^\al_{\ga}(q,\tau)\, x^\ga 
\label{eq:18}
\end{equation}
are rational functions in the variables $q,\tau_1,\ldots,\tau_n$. The rational function in $q,\tau_1,\ldots,\tau_n$
representing
$c^\al_\al(q,\tau)$ for $q$ not a root of unity and $\tau\in\FST_n$, is nonzero.
\end{lemma}
%
\Proof
We will prove that the $c^\al_\ga(q,\tau)$ are rational in $q,\tau$ by
considering them for fixed $\al$ as solutions of a linear system with rational
coefficients.
We use the notations introduced in the proof of Theorem \ref{th:7}. Let $d\in\ZZ_{\geq 0}$ and write $m:=|\La_{n,d}|$. Identify
$\Lambda_{n,d}$ with $[1,m]$ by fixing an enumeration of the elements in  $\Lambda_{n,d}$. This provides 
vector space identifications
\[
\sum_{\ga\in\La_{n,d}}d_\ga x^\ga\mapsto \bigl(d_\ga\bigr)_{\ga}:\, \FSP_{n,d}\overset{\sim}{\longrightarrow} \CC^m,
\qquad f\mapsto \bigl(f(\wb\ga(q,\tau))\bigr)_{\ga}:\, \mathcal{F}_{n,d}^{q,\tau}\overset{\sim}{\longrightarrow}\CC^m.
\]
The linear isomorphism $\phi_{n,d}^{q,\tau}:\FSP_{n,d}\overset{\sim}{\longrightarrow}\mathcal{F}_{n,d}^{q,\tau}$ is then represented by the invertible matrix
\[
A(q,\tau):=\bigl(\wb\be(q,\tau)^\ga\bigr)_{\be,\ga}\in\textup{GL}_m(\CC).
\]
Let $\{e^\al\mid\,\al\in\La_{n,d}\}$ be the standard basis of $\CC^m$. Then Definition \ref{def:5} implies that
\[
\bigl(c^\al_\ga(q,\tau)\bigr)_\ga=A(q,\tau)^{-1}e_\al\qquad \forall\,\al\in\La_{n,d},
\]
and Example \ref{voorbeeld2} shows that $c^\al_\al(q,\mathbf{s})\not=0$ if $s\in\CC^*$ and $s^2\not\in q^\ZZ$.
The result now follows from the fact that the matrix coefficients of $A(q,\tau)$ are rational functions in
$q,\tau_1,\ldots,\tau_n$.
\qed
\bPP
For $n=1$ we find
 from \eqref{eq:19} and \eqref{eq:17} that
\[
R_\ell(x;q,s)=G_\ell(x;q,s)+G_{-\ell}(x;q,s)
\qquad(\ell\in\ZZ_{\geq 0}).
\]
This generalizes to arbitrary $n\geq 1$ as follows.
\begin{theorem}\label{symmversusnonsymm}
Let $R_\la(x)=R_\la(x;q,\tau)$ and
$G_\al(x)=G_\al(x;q,\tau)$ be the symmmetric and nonsymmetric
interpolation polynomials as given by
Definitions \ref{th:5} and
\ref{def:5}, respectively. 

For $\la\in\La_n^+$ and $\al\in\Lan$ we have
\begin{equation}\label{eq:101}
\begin{split}
R_\la(\wb\al)&=R_\la(\wb{\al^+}),\\
R_\la(x)&=\sum_{\be\in W_n\la}G_\be(x).
\end{split}
\end{equation}
\end{theorem}
 \Proof
The first formula in \eqref{eq:101} follows immediately from part a) of Corollary \ref{cor:9}.

Let $\la\in\La_n^+$ and write $H_\la:=\sum_{\be\in W_n\la}G_\be$.
Then, by the definitions of $R_\la$ and $G_\be$
and by part a) of the theorem, $R_\la$ and $H_\la$ are Laurent
polynomials of degree at most $|\la|$
satisfying 
\[
R_\la(\wb\al)=\de_{\al^+,\la}=H_\la(\wb\al) \qquad (\al\in\La_{n,|\la|}).
\]
By Theorem \ref{th:7} we conclude that $R_\la=H_\la$.
\qed
\bPP
\begin{remark}
An analogous statement as Theorem \ref{symmversusnonsymm}
holds true for Sahi's \cite{S96} symmetric and
nonsymmetric interpolation polynomials, with essentially the same proof.
\end{remark}

Consider the principal specialization $\tau_i=st^{n-i}$ of
$\tau\in\FST_n$.
The explicit formula \eqref{eq:2} for the interpolation
point $\wb\al$ ($\al\in\Lan$) then takes the form
\begin{equation}
\wb\al_i=q^{\al_i}\big(st^{n-\pi_\al^{-1}(i)}\big)^{\sgn(\al_i)}\qquad
(i\in [1,n]).
\label{eq:23}
\end{equation}
If $s=\sqrt{q^{-1}abcd}$ then 
$\wb\al_i$ \eqref{eq:23} corresponds to the eigenvalue of Noumi's $Y$-operator $Y_i$ for the
nonsymmetric Koornwinder polynomial $E_\al(x;a,b,c,d;q,t)$ of degree
$\al$, see \cite[\S 6]{Sahi99} (compare with Remark \ref{interpolationgrid} for the symmetric theory). 
Note that for $\tau=st^\rho$, the second formula in \eqref{eq:101} gives the expansion of the $BC_n$-type interpolation Macdonald polynomial $R_\la(x;q,s,t)$
in terms of the interpolation Laurent polynomials $G_\be(x;q,s,t)$ ($\be\in W_n\la$). This is an analogue for interpolation polynomials of the 
expansion formula expressing Koornwinder polynomials as linear combination of nonsymmetric Koornwinder polynomials, see \cite[Thm.~6.6]{St}.
\begin{definition}
Let $s,t\in\CC^*$ such that $st^\rho\in\FST_n$. Then we call
\begin{equation*}
G_\al(x;q,s,t):=G_\al(x;q,st^\rho)
\end{equation*}
the nonsymmetric $BC_n$-type interpolation Macdonald polynomial of degree $\al\in\La_n$.
\end{definition}
Observe that $G_\al(x;q,s,t$) is the unique Laurent polynomial of degree
$\leq |\al|$ satisfying $G_\al(\wb\al;q,s,t)=1$ and $G_\al(\wb{\be};q,s,t)=0$ 
($\be\in\La_{n,|\al|}\setminus\{\al\}$), with $\wb\al$ given by \eqref{eq:23}. 

In the following two sections we present first steps towards answering the question whether the nonsymmetric $BC_n$-type interpolation Macdonald polynomials satisfy extra vanishing properties and admit explicit binomial formulas.
For Knop's \cite{Kn97} type $A_{n-1}$ nonsymmetric interpolation Macdonald polynomials, extra vanishing and explicit binomial formulas were derived in \cite{Kn97,S96,SaSt}. Their proofs lean on a generalization of Cherednik's action of the double affine Hecke algebra on polynomials in $n$ variables for which the type $A_{n-1}$ nonsymmetric interpolation Macdonald polynomials are common eigenfunctions of the resulting $Y$-operators.

It is not known whether the nonsymmetric $BC_n$-type interpolation Macdonald polynomials $G_\al(x;q,s,t)$ satisfy extra
vanishing properties. In the Appendix we will present the outcome of computer algebra computations describing extra vanishing
for $G_\al(x;q,s,t)$ when $n=2$ and $|\al|=4$.
\section{The action of Demazure--Lusztig operators}
\label{sec:5}
We introduce an action of the 
type $C_n$ Hecke algebra on the space of Laurent polynomials in $n$ variables, defined in terms of Demazure--Lusztig operators.
We explicitly compute its action on nonsymmetric $BC_n$-type interpolation Macdonald polynomials.
Similar to the type $A_{n-1}$ case in
\cite{Kn97,S96}, the Hecke algebra techniques in this section can only be applied when taking the principal specialization $\tau=st^\rho$.

Recall our notations associated with
root system $C_n$  in Section \ref{sec:2}.
\begin{Definition}[Hecke algebra of type $B_n$ or $C_n$]\label{heckealgebra}
Let $\FSH_n(t,t_n)$ be the complex unital associative algebra with 
 generators $T_1,\ldots,T_n$, parameters $t,t_n\in\CC^*$,
 and defining relations
\begin{equation*}
\begin{split}
&T_iT_{i+1}T_i=T_{i+1}T_iT_{i+1},\qquad\qquad\quad i\in [1,n-2],\\
&T_{n-1}T_nT_{n-1}T_n=T_nT_{n-1}T_nT_{n-1},\\
&T_iT_j=T_jT_i,\qquad\qquad\qquad\quad\quad\,\,\,\,\,\,\, |i-j|>1,\\
&(T_i-t_i)(T_i+1)=0,\qquad\qquad\quad\quad i\in [1,n]
\end{split}
\end{equation*}
with $t_i:=t$ for $i\in [1,n)$.
\end{Definition}
\begin{remark}
The relations $(T_i-t_i)(T_i+1)=0$ are related to the usual
Hecke relations $(\wt T_i-\wt t_i)(\wt T_i+\wt t_i^{-1})=0$
\cite[(4.1.1)]{Ma00} by the substitutions
$\wt t_i=t_i^{1/2}$, $\wt T_i=t_i^{-1/2}T_i$
(Sahi \cite[\S2.3]{Sahi99} has Hecke relations as in \cite{Ma00} with
$\wt t_i=t_i^{1/2}$).
\end{remark}

The trivial one-dimensional representation $\chi$ of $\FSH_n(t,t_n)$
is characterized by $\chi(T_i)=t_i$ for $i\in [1,n]$.
For a reduced expression $w=s_{i_1}\cdots s_{i_r}$ of $w\in W_n$
define $T_w\in \FSH_n(t,t_n)$ by $T_w:=T_{i_1}\cdots T_{i_r}$.
This is independent of the choice of the reduced expression,
see \cite[Proposition 1.15]{IM}.
Define the Hecke symmetrizer of $\FSH_n(t,t_n)$ by
\begin{equation}
C_+:=\sum_{w\in W_n}T_w,
\label{eq:28}
\end{equation}
then  
\begin{equation}
hC_+=\chi(h)C_+=C_+h\qquad(h\in \FSH_n(t,t_n)),
\label{eq:29}
\end{equation}
cf., e.g., \cite[(5.5.7), (5.5.9)]{Ma00}.

Noumi ~\cite{No} introduced a one-parameter family of representations
of $\FSH_n(t,t_n)$ on $\FSP_n$ in terms of Demazure--Lusztig type operators \cite[Proposition~3.6]{L}.
Concretely, it is given by
\begin{equation*}
\begin{split}
T_j&\mapsto t+\frac{x_j-tx_{j+1}}{x_j-x_{j+1}}\,(s_j-1),
\qquad\qquad\qquad j\in [1,n),\\
T_n&\mapsto t_n+\frac{(1-ax_n^{-1})(1-bx_n^{-1})}{1-x_n^{-2}}\,(s_n-1)
\end{split}
\end{equation*}
with $a,b\in\CC$ such that $ab=-t_n$. 
As we shall see in Proposition \ref{expansiontheorem}, the specialization
of the Hecke parameters $t,t_n$ and the representation parameters $a,b$
that is needed for the application to nonsymmetric $BC_n$-type interpolation Macdonald polynomials,
is $t_n=-1$ and $a=s$, $b=s^{-1}$
with $s\in\CC^*$. Noumi's 
representation then takes the following form.
%
\begin{lemma}
\label{th:10}
Let $s,t\in\CC^*$.
The assignments $T_j\mapsto H_j^{(t)}$ ($j\in [1,n)$)
and $T_n\mapsto H_n^{(s)}$ with
\begin{align}
H_j^{(t)}&:=t+\frac{x_j-tx_{j+1}}{x_j-x_{j+1}}\,(s_j-1),
\qquad\qquad j\in [1,n),
\label{eq:24}\\
H_n^{(s)}&:=-1+\frac{(1-sx_n^{-1})(1-s^{-1}x_n^{-1})}{1-x_n^{-2}}\,
(s_n-1)\label{eq:25}
\end{align}
define a one-parameter family of representations
$\pi_s\colon \FSH_n(t,-1)\rightarrow
\textup{End}(\FSP_n)$ on $\FSP_n$.
\end{lemma}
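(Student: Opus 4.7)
The plan is to recognize that the operators $H_j^{(t)}$ $(1\le j<n)$ and $H_n^{(s)}$ are exactly Noumi's Demazure-Lusztig operators for $\FSH_n(t,t_n)$ specialized to $t_n=-1$, with representation parameters $a=s$ and $b=s^{-1}$ (so that the required compatibility $ab=-t_n=1$ holds). Under this identification the lemma reduces directly to Noumi's \cite{No} theorem, as reformulated in \cite[\S 2]{Sahi99}, that the prescribed formulas define a representation of $\FSH_n(t,t_n)$ on $\FSP_n$. This is the approach I would present as the main proof.

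For a self-contained verification, I would check the four families of relations in Definition \ref{heckealgebra} in turn. For the quadratic relations, I write $H_j^{(t)}=(t-a_j)+a_js_j$ with $a_j:=(x_j-tx_{j+1})/(x_j-x_{j+1})$. Using the key identity $a_j+s_j(a_j)=1+t$ and $s_j^2=\id$, a short computation gives $(H_j^{(t)})^2=(t-1)H_j^{(t)}+t$, equivalent to $(H_j^{(t)}-t)(H_j^{(t)}+1)=0$. The analogous computation for $H_n^{(s)}$, with $a_n:=(1-sx_n^{-1})(1-s^{-1}x_n^{-1})/(1-x_n^{-2})$, uses the symmetric-function identity $a_n+s_n(a_n)=2$, and yields $(H_n^{(s)})^2=-2H_n^{(s)}-1$, i.e.\ $(H_n^{(s)}+1)^2=0$, which is the $t_n=-1$ quadratic relation. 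The commutation relations $H_i H_j=H_j H_i$ for $|i-j|>1$ are immediate, since the relevant operators act on disjoint pairs of variables. The type $A$ braid relations $H_i^{(t)}H_{i+1}^{(t)}H_i^{(t)}=H_{i+1}^{(t)}H_i^{(t)}H_{i+1}^{(t)}$ for $1\le i<n-1$ are the classical Demazure-Lusztig braid relations, reducing to a computation in the three variables $x_i,x_{i+1},x_{i+2}$.

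The main obstacle is the type $BC$ braid relation
\[
H_{n-1}^{(t)}H_n^{(s)}H_{n-1}^{(t)}H_n^{(s)}=H_n^{(s)}H_{n-1}^{(t)}H_n^{(s)}H_{n-1}^{(t)}.
\]
Since both sides involve only $x_{n-1}$ and $x_n$, it suffices to prove it in the rank-two case $n=2$. A direct expansion is feasible but combinatorially heavy: both sides are linear combinations of the four elements $1,s_1,s_2,s_1s_2,\ldots$ of $W_2$ with rational-function coefficients in $x_1,x_2$, and one must match the coefficients of each Weyl group element. I would either grind through this check (perhaps using computer algebra), or more economically, quote the Noumi-Sahi verification, which covers our specialization $t_n=-1$, $ab=1$ as a limit from the generic case with no adjustments. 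The latter route is what I would ultimately present, making the proof of the lemma very short.
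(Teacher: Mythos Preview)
Your proposal is correct and matches the paper's approach: the paper does not give a separate proof of this lemma, but rather states it as the specialization $t_n=-1$, $a=s$, $b=s^{-1}$ (so $ab=-t_n$) of Noumi's \cite{No} representation, exactly as you do in your main argument. Your optional direct verification of the Hecke relations is additional detail not present in the paper, but the core reasoning is the same.
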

%
In the following lemma we show that $\pi_s$ preserves the degree-filtration on 
$\FSP_{n}$.
\begin{lemma}
\label{th:11}
Let $s,t\in\CC^*$. Then $\FSP_{n,d}$ ($d\in\ZZ_{\ge0}$)
is a $\FSH_n(t,-1)$-submodule of $\FSP_n$
with respect to the action $\pi_s$.
\end{lemma}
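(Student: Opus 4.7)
The plan is to reduce to verifying that each of the generators $H_j^{(t)}$ ($1\le j<n$) and $H_n^{(s)}$ sends $\FSP_{n,d}$ into itself; by linearity this in turn reduces to checking $H_j^{(t)}(x^\al),H_n^{(s)}(x^\al)\in\FSP_{n,|\al|}$ for every monomial $x^\al$ ($\al\in\ZZ^n$).

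For the operators $H_j^{(t)}$ with $j<n$: the singular factor $1/(x_j-x_{j+1})$ is compensated by the vanishing of $(s_j-1)x^\al$ along $x_j=x_{j+1}$. If $\al_j=\al_{j+1}$ then $(s_j-1)x^\al=0$ and $H_j^{(t)}(x^\al)=tx^\al$, trivially of degree $|\al|$. Otherwise, using the elementary identity $(x_j^m-x_{j+1}^m)/(x_j-x_{j+1})=\sum_\ell x_j^{m-1-\ell}x_{j+1}^\ell$, a direct computation expresses $H_j^{(t)}(x^\al)$ as a linear combination of the ``edge'' monomial $x^{s_j\al}$ and ``intermediate'' monomials of the form $x_j^{p}x_{j+1}^{q}\prod_{i\neq j,j+1}x_i^{\al_i}$ with $p+q=\al_j+\al_{j+1}$ and $(p,q)$ lying in the closed interval between $(\al_j,\al_{j+1})$ and $(\al_{j+1},\al_j)$. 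A short convexity argument for the piecewise linear function $p\mapsto|p|+|\al_j+\al_{j+1}-p|$ shows that its maximum on this closed interval is attained at the endpoints with value $|\al_j|+|\al_{j+1}|$. Combined with $|s_j\al|=|\al|$, this gives $H_j^{(t)}(x^\al)\in\FSP_{n,|\al|}$.

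For $H_n^{(s)}$: when $\al_n=0$ the operator acts as $-1$, which is degree-preserving. For $\al_n\neq 0$, clearing denominators one sees that
\[
\frac{x_n^{-\al_n}-x_n^{\al_n}}{1-x_n^{-2}}=\mp\bigl(x_n^{2-|\al_n|}+x_n^{4-|\al_n|}+\cdots+x_n^{|\al_n|}\bigr),
\]
a finite sum in which every $x_n$-exponent has absolute value $\le|\al_n|$. Multiplying by the numerator $(1-sx_n^{-1})(1-s^{-1}x_n^{-1})=1-(s+s^{-1})x_n^{-1}+x_n^{-2}$ shifts these exponents by $0,-1,-2$; inspection of the resulting finite list of exponents shows every one still has absolute value $\le|\al_n|$. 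Multiplying by the remaining monomial factor $x^{\al-\al_n e_n}$ of degree $|\al|-|\al_n|$ keeps the total degree at $\le|\al|$, and adding $-x^\al$ preserves this.

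The main obstacle is the subadditivity (but not additivity) of the degree function under multiplication in $\FSP_n$, emphasized in Section~\ref{sec:2}: naively, division by the apparent singular denominators $x_j-x_{j+1}$ and $1-x_n^{-2}$ could raise the degree (as already seen from $\deg(x_j^{-1}\cdot(x_j-x_{j+1})^{-1})$-type expressions), and subsequent multiplication by the Demazure numerators could raise it further. What saves the argument is the precise cancellation between numerator and denominator built into the Demazure--Lusztig operators, and checking this rigorously forces the sign case analysis on $\al_j,\al_{j+1}$ and on $\al_n$ sketched above.
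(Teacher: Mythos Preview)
Your proof is correct and follows essentially the same approach as the paper: both reduce to checking that the generators preserve degree on monomials, compute the action of $H_j^{(t)}$ and $H_n^{(s)}$ on $x^\al$ explicitly enough to read off the exponents, and invoke convexity of the set $\{(\xi,\eta)\in\RR^2\mid |\xi|+|\eta|\le d\}$ to bound the degrees of the intermediate terms. The paper writes out the full closed-form expressions for $H_1^{(s)}(x^k)$ and $H_1^{(t)}(x_1^kx_2^\ell)$ case-by-case, whereas you give a slightly more schematic description of the same computation; substantively there is no difference.
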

%
\Proof
We have to prove that $H_n^{(s)}(x^\al)$ and $H_j^{(t)}(x^\al)$ ($j\in [1,n)$)
are Laurent polynomials of degree at most $|\al|$.
Clearly, it is sufficient to prove the first claim for $n=1$ and the
second claim for $n=2$ and $j=1$. 

A straightforward computation gives that
\begin{equation*}
H_1^{(s)}(x^k)=-\sum_{i=0}^kx^{2i-k}-\sum_{i=1}^kx^{2i-k}+(s+s^{-1})\sum_{i=1}^kx^{2i-k-1}
\end{equation*}
for $k\in\ZZ$, where we use the convention that $\sum_{i=k}^\ell f_i$ is equal to $0$ if $k=\ell+1$ and equal to $-\sum_{i=\ell+1}^{k-1}f_i$ if $k>\ell+1$ (then formally,  $\sum_{i=k}^\ell f_i=\sum_{i=k}^\infty f_i-
\sum_{i=\ell+1}^\infty f_i$ for all $k,\ell\in\ZZ$). Similarly,
\begin{equation*}
H_1^{(t)}(x_1^kx_2^\ell)=-\sum_{i=1}^{k-\ell}x_1^{i+\ell}x_2^{-i+k}+t\sum_{i=0}^{k-\ell}x_1^{i+\ell}x_2^{-i+k}.
\end{equation*}
It follows from these formulas that $H_1^{(s)}(x^k)$ and $H_1^{(t)}(x_1^kx_2^\ell)$ are Laurent polynomials of degree $|k|$
and $|k|+|\ell|$, respectively (for the second case observe that the set
$\{(\xi,\eta)\in\RR^2\mid |\xi|+|\eta|\le d\}$ is convex).\qed
\bPP
The action of Demazure--Lusztig type operators on normalized
nonsymmetric Koornwinder polynomials was determined explicitly in
\cite[Prop.~7.8(ii)]{St}, and on nonsymmetric interpolation Macdonald polynomials in \cite[Lem. 10(1)]{SaSt}. For nonsymmetric $BC_n$-type interpolation Macdonald polynomials
we have the following result.
\begin{proposition}\label{expansiontheorem}
Let $\al\in\Lan$ such that $st^\rho\in\FST_n$.
Write
$G_\al=G_\al(\cdot;q,s,t)$ for the nonsymmetric $BC_n$-type interpolation Macdonald polynomial
of degree $\al$, and $\wb\al$ for
the interpolation point \eqref{eq:23}.
\alphlist
\item
Let $j\in [1,n)$. Then
\begin{equation}\label{f1}
\begin{split}
H_j^{(t)}G_\al&=tG_\al\qquad\qquad\qquad\qquad\qquad\qquad
\qquad \hbox{if } s_j\al=\al,\\
H_j^{(t)}G_\al&=-G_\al+
\frac{\wb{\al}_{j+1}-t\wb{\al}_j}
{\wb{\al}_{j+1}-\wb{\al}_{j}}\,
\big(G_{s_j\al}+G_\al\big)
\qquad \hbox{if } s_j\al\not=\al.
\end{split}
\end{equation}
\item
We have
\begin{equation}\label{f2}
H_n^{(s)}G_\al=-G_\al+
\frac{(1-s\wb{\al}_n)(1-s^{-1}\wb{\al}_n)}
{1-\wb{\al}_n^2}\,
\big(G_{s_n\al}+G_\al\big).
\end{equation}
In particular, if $s_n\al=\al$ then
$H_n^{(s)}G_\al=-G_\al$ by Lemma \ref{th:9} c).
\end{list}
\end{proposition}
\begin{remark}
Note that the right-hand sides of \eqref{f1} and \eqref{f2} are well
defined by 
part c) of 
Corollary \ref{cor:9}.
Note furthermore that in \eqref{f1} the second formula does not reduce
to the first formula if we would assume $s_j\al=\al$. Indeed, part b)
of Lemma \ref{th:9} in case of the principal specialization
$\tau_i=st^{n-i}$ ($i\in [1,n]$) implies that
\begin{equation}\label{pcequal}
\wb\al_j/\wb\al_{j+1}=t\qquad
(j\in [1,n),\,\al\in\Lan\mbox{ with }s_j\al=\al).
\end{equation}
Hence the right-hand side of the first formula in \eqref{f1} equals
$(2t+1)G_\al$ when $s_j\al=\al$ ($j\in [1,n)$).
\end{remark}
\noindent
{\bf Proof of Proposition \ref{expansiontheorem}.}\quad\\
{\bf a)} The starting point is the formula (from \eqref{eq:24})
\begin{equation}
\big(H_j^{(t)}G_\al\big)\big(\,\wb\be\,\big)=
tG_\al\big(\,\wb\be\,\big)+
\frac{\wb\be_j-t\,\wb\be_{j+1}}
{\wb\be_j-\wb\be_{j+1}}\,
\big(G_\al\big(\bm{s_j}\wb\be\,\big))-
G_\al\big(\,\wb\be\,\big)\big)
\label{eq:26}
\end{equation}
for $\be\in\Lan$.  For $\be\in\La_{n,|\al|}\setminus\{\al\}$ we then
have by Lemma \ref{th:9} that
\begin{equation}\label{start}
\big(H_j^{(t)}G_\al\big)\big(\,\wb\be\,\big)=
\begin{cases}
0\qquad &\mbox{if $s_j\be\ne\al$},\sLP
\displaystyle\frac{\wb{\be}_j-t\,\wb{\be}_{j+1}}
{\wb{\be}_j-\wb{\be}_{j+1}}\qquad &\mbox{if $s_j\be=\al$}.
\end{cases}
\end{equation}
Now take $\be=\al$ and first
consider the case that $s_j\al=\al$.
Then \eqref{start} implies
that $H_j^{(t)}G_\al\in \FSP_{n, |\al|}$ vanishes
at the interpolation points
$\wb{\be}$ with $\be\in \La_{n,|\al|}\setminus\{\al\}$, while
we get from \eqref{eq:26} and Lemma \ref{th:9} that
\[
\big(H_j^{(t)}G_\al\big)(\wb{\al})=tG_\al(\wb{\al})=t.
\]
This proves the first formula of \eqref{f1}.
Now suppose that $s_j\al\not=\al$. 
Then \eqref{start} implies that $H_j^{(t)}G_\al\in\FSP_{n,|\al|}$ vanishes
at the interpolation points $\wb{\be}$ for
$\be\in \La_{n,|\al|}\setminus\{\al,s_j\al\}$.
Furthermore, by \eqref{eq:26} and Lemma \ref{th:9},
\begin{equation*}
\big(H_j^{(t)}G_\al\big)(\wb{s_j\al})=
\frac{\wb{\al}_{j+1}-t\wb{\al}_j}{\wb{\al}_{j+1}-\wb{\al}_j},
\qquad
\big(H_j^{(t)}G_\al\big)(\wb{\al})=\frac{(t-1)\wb{\al}_j}{\wb{\al}_j-
\wb{\al}_{j+1}}.
\end{equation*}
Hence the Laurent polynomial
\[
F:=H_j^{(t)}G_\al-
\frac{\wb{\al}_{j+1}-t\wb{\al}_j}{\wb{\al}_{j+1}-\wb{\al}_j}\,
G_{s_j\al}-
\frac{(t-1)\wb{\al}_j}{\wb{\al}_j-\wb{\al}_{j+1}}\,G_\al
\]
of degree at most $|\al|$
vanishes at all the interpolation points $\wb{\be}$ with
$\be\in \La_{n,|\al|}$, which
forces $F\equiv 0$. We conclude that
\[
H_j^{(t)}G_\al=
\frac{\wb{\al}_{j+1}-t\wb{\al}_j}{\wb{\al}_{j+1}-\wb{\al}_j}\,
G_{s_j\al}+
\frac{(t-1)\wb{\al}_j}{\wb{\al}_j-\wb{\al}_{j+1}}\,G_\al.
\]
Rewriting the right-hand side yields the second formula of \eqref{f1}.
\sLP
{\bf b)}
The proof proceeds similar to the proof of a),
now starting with the formula (from \eqref{eq:24})
\begin{equation}
\big(H_n^{(s)}G_\al\big)\big(\,\wb{\be}\,\big)=
-G_\al\big(\,\wb{\be}\,\big)+
\frac{\big(1-s\wb{\be}_n^{-1}\big)\big(1-s^{-1}\wb{\be}_n^{-1}\big)}
{1-\wb{\be}_n^{-2}}\,
\big(G_\al\big(\bm{s_n}\wb{\be}\,\big)-G_\al\big(\,\wb{\be}\,\big)\big)
\label{eq:27}
\end{equation}
for $\be\in\Lan$. By Lemma \ref{th:9} the formula 
reduces for $\be\in \La_{n,|\al|}\setminus\{\al\}$ to
\begin{equation}\label{start2}
\big(H_n^{(s)}G_\al\big)\big(\,\wb{\be}\,\big)=
\begin{cases}
0\qquad &\mbox{if $s_n\be\ne\al$,}\sLP
\displaystyle
\frac{\big(1-s\wb{\be}_n^{-1}\big)\big(1-s^{-1}\wb{\be}_n^{-1}\big)}
{1-\wb{\be}_n^{-2}}\qquad &\mbox{if $s_n\be=\al$.}
\end{cases}
\end{equation}
If $s_n\al=\al$ then it follows from part c) of Lemma \ref{th:9} in
case of principal specialization that $\wb\al_n=s$,
hence $(H_n^{(s)}G_\al)(\wb\al)=-G_\al(\wb\al)=-1$ by \eqref{eq:27}.
We conclude that
$H_n^{(s)}G_\al=-G_\al$ if $s_n\al=\al$, which agrees with \eqref{f2}.
If $s_n\al\not=\al$ then observe
that, by \eqref{eq:27} and Lemma \ref{th:9},
\begin{equation*}
\big(H_n^{(s)}G_\al\big)(\wb{s_n\al})=\frac{(1-s\wb{\al}_n)
(1-s^{-1}\wb{\al}_n)}{1-\wb{\al}_n^2},\qquad
\big(H_n^{(s)}G_\al\big)(\wb{\al})=\frac{2\wb{\al}_n^2-(s+s^{-1})
\wb{\al}_n}{1-\wb{\al}_n^2}.
\end{equation*}
Continuing the proof as in part a) readily leads to the formula
\eqref{f2}.\qed
\bPP
Recall from Lemma \ref{th:10} the representation $\pi_s$ of $\FSH(t,-1)$
on $\FSP_n$ and from \eqref{eq:28} the Hecke symmetrizer $C_+$.
\begin{lemma}
Let $s,t\in\CC^*$. The map
$\pi_s(C_+)$ restricts to a map
$\pi_s(C_+)\colon\FSP_{n,d}\to\FSP_{n,d}^{W_n}$.
\end{lemma}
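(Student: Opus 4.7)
The claim breaks into two independent assertions: preservation of the degree filtration $\FSP_{n,d}$, and $W_n$-invariance of the image. I would handle them in that order.

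For the degree part, Lemma \ref{th:11} already does the work: it shows that each generator $T_i$ acts via $\pi_s$ as a degree-preserving operator, hence so does $\pi_s(T_w)$ for every $w\in W_n$, and finally $\pi_s(C_+)=\sum_{w\in W_n}\pi_s(T_w)$ restricts to an endomorphism of $\FSP_{n,d}$.

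For the invariance part, the key ingredient is the symmetrizer identity \eqref{eq:29} applied with $h=T_i$: this gives $T_iC_+=\kappa_iC_+$ in $\FSH_n(t,-1)$, where $\kappa_i=t$ for $1\le i<n$ and $\kappa_n=-1$. Applying the representation $\pi_s$ and evaluating on an arbitrary $f\in\FSP_{n,d}$, we obtain $H_i^{(t)}\bigl(\pi_s(C_+)f\bigr)=t\,\pi_s(C_+)f$ for $i<n$ and $H_n^{(s)}\bigl(\pi_s(C_+)f\bigr)=-\pi_s(C_+)f$. Now I read off from the explicit formulas \eqref{eq:24} and \eqref{eq:25} that
\[
H_j^{(t)}-t=\frac{x_j-tx_{j+1}}{x_j-x_{j+1}}\,(s_j-1),\qquad
H_n^{(s)}-(-1)=\frac{(1-sx_n^{-1})(1-s^{-1}x_n^{-1})}{1-x_n^{-2}}\,(s_n-1),
\]
so each of these operators is a nonzero rational multiple of $s_i-1$. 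Setting $g:=\pi_s(C_+)f$, the eigenvalue equations therefore force $(s_i-1)g=0$ for $i=1,\ldots,n$, i.e.\ $s_ig=g$ for every simple reflection. Since the $s_i$ generate $W_n$, this proves $g\in\FSP_{n,d}^{W_n}$.

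There is essentially no obstacle here: the two parts are decoupled, the first is a direct citation of Lemma \ref{th:11}, and the second is a one-line application of \eqref{eq:29} combined with the observation that each $H_i-\kappa_i$ factors through $s_i-1$. The only mild subtlety worth flagging is that one must check the rational prefactors in $H_j^{(t)}-t$ and $H_n^{(s)}+1$ are not identically zero as operators on $\FSP_n$; this is immediate because these prefactors are nonzero rational functions, so multiplication by them is injective on the field of fractions and in particular on $\FSP_n$.
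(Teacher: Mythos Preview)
Your proof is correct and follows essentially the same approach as the paper: both use Lemma~\ref{th:11} for the degree bound and the symmetrizer identity \eqref{eq:29} combined with the explicit form of $H_i-\kappa_i$ as a rational multiple of $s_i-1$ for the $W_n$-invariance. Your added remark about the nonvanishing of the rational prefactors is a harmless extra justification that the paper leaves implicit.
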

\begin{proof}
The fact that $f^+:=\pi_s(C_+)f$ is $W_n$-invariant for 
$f\in\FSP_n$ follows from the following standard argument in
Cherednik-Macdonald theory: by \eqref{eq:29}
the Laurent polynomial $f^+$ satisfy
$(H_j^{(t)}-t)f^+=0$ for $j\in [1,n)$ and $(H_n^{(s)}+1)f^+=0$.
By the explicit forms \eqref{eq:24}, \eqref{eq:25}
of the Demazure--Lusztig operators, this is equivalent to $s_jf^+=f^+$
for $j\in [1,n)$ and $s_nf^+=f^+$.
The map $\pi_s(C_+)$ preserves $\FSP_{n,d}$ by Lemma ~\ref{th:11}.
\end{proof}
\begin{lemma}\label{steplem}
Let $s,t\in\CC$ such that $st^\rho\in\FST_n$.
Then 
\begin{equation}\label{s1}
\pi_s(C_+)G_\al=\textup{cst}_\al\, R_{\al^+}
\end{equation}
for all $\al\in\Lan$, with $\textup{cst}_\al:=\big(\pi_s(C_+)G_\al\big)(\wb{\al^+})$.
\end{lemma}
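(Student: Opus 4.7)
The plan is to use the interpolation characterization of $R_{\al^+}$ together with the formulas from Theorem~\ref{expansiontheorem} to pin down $\pi_s(C_+)G_\al$ completely. Set $d:=|\al|$. By the previous lemma, $f:=\pi_s(C_+)G_\al$ lies in $\FSP_{n,d}^{W_n}$, so it has a unique expansion
\[
f=\sum_{\mu\in\La_{n,d}^+}f(\wb\mu)\,R_\mu
\]
via the interpolation basis from Definition~\ref{th:5}. Hence the whole lemma reduces to the claim that $f(\wb\mu)=0$ for every $\mu\in\La_{n,d}^+$ with $\mu\neq\al^+$; the value $f(\wb{\al^+})$ is then by definition $\textup{cst}_\al$.

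To establish this vanishing, I would first show by induction on $\ell(w)$ that, for every $w\in W_n$, the Laurent polynomial $\pi_s(T_w)G_\al$ is a (finite) $\CC$-linear combination
\[
\pi_s(T_w)G_\al=\sum_{\be\in W_n\al}a^{(w)}_\be\,G_\be.
\]
The base case $w=e$ is trivial, and the inductive step uses a reduced expression $w=s_{i_1}\cdots s_{i_r}$ together with Theorem~\ref{expansiontheorem}: each $H_j^{(t)}$ (for $j<n$) and $H_n^{(s)}$ sends $G_\be$ to a combination of $G_\be$ and $G_{s_j\be}$, so the set $W_n\al$ is preserved throughout. Summing over $w\in W_n$ gives
\[
f=\sum_{\be\in W_n\al}c_\be\,G_\be
\qquad\text{for some }c_\be\in\CC.
\]

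Now fix any $\mu\in\La_{n,d}^+$. Each $\be$ appearing in the sum satisfies $|\be|=|\al|=d$, so $\be\in\La_{n,d}$, and by Definition~\ref{def:5} one has $G_\be(\wb\mu)=\de_{\mu,\be}$. Since $\mu\in\La_n^+$ is the unique partition in its $W_n$-orbit, the only $\be\in W_n\al$ that can coincide with $\mu$ is $\be=\mu=\al^+$. Therefore
\[
f(\wb\mu)=
\begin{cases} c_{\al^+} &\text{if }\mu=\al^+,\\ 0 &\text{otherwise},\end{cases}
\]
and so $f=c_{\al^+}R_{\al^+}$. Evaluating this identity at $\wb{\al^+}$ (where $R_{\al^+}(\wb{\al^+})=1$) identifies the constant as $\textup{cst}_\al=f(\wb{\al^+})$, as claimed.

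The only non-routine step is the closure statement that $\pi_s(T_w)G_\al$ remains supported on the $W_n$-orbit $W_n\al$ in the $G$-basis; but this is a clean consequence of the two cases in Theorem~\ref{expansiontheorem}, whose right-hand sides in each instance involve only $G_\al$ and possibly $G_{s_j\al}$. Everything else is bookkeeping within the interpolation framework already developed in Sections~\ref{sec:3} and~\ref{sec:4}.
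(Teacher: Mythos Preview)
Your argument is correct and follows essentially the same route as the paper's proof: use the previous lemma to land in $\FSP_{n,d}^{W_n}$, invoke Theorem~\ref{expansiontheorem} to see that $\pi_s(C_+)G_\al$ is a $\CC$-linear combination of $G_\be$ with $\be\in W_n\al$, and then evaluate at the partition interpolation points to force all coefficients in the $R_\mu$-expansion to vanish except the one at $\mu=\al^+$. The only difference is cosmetic: you spell out the induction on $\ell(w)$ for the orbit-closure statement, whereas the paper simply attributes it to Theorem~\ref{expansiontheorem}.
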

\Proof
To prove \eqref{s1} it suffices, in view of the previous lemma, to show that
\[
\big(\pi_s(C_+)G_\al\big)(\wb{\mu})=0
\]
for all $\mu\in \La_{n,|\al|}^+\setminus\{\al^+\}$. But by Proposition
\ref{expansiontheorem} we have
\[
\pi_s(C_+)G_\al=\sum_{\ga\in W_n\al}d_\ga^\al\, G_\ga
\]
for certain coefficients $d_\ga^\al\in\CC$. Each $G_\ga$ vanishes at 
$\wb{\mu}$ ($\mu\in\La_{n,|\al|}^+\setminus\{\al^+\}$) since
\[
\La_{n,|\al|}^+\setminus\{\al^+\}\subseteq \La_{n,|\ga|}\setminus\{\ga\}
\qquad \forall\, \ga\in W_n\al.
\]
This concludes the proof of the lemma.\qed
\bPP
We end this section by computing
$\textup{cst}_\la=(\pi_s(C_+)G_\la)(\wb\la)$ for $\la\in\La_n^+$. Fix
$\la\in\La_n^+$.  
Put $C_+^\la:=\sum_{u\in W_n^\la}T_u$ and
$C_{+,\la}:=\sum_{v\in W_{n,\la}}T_v$. Since
$\ell(uv)=\ell(u)+\ell(v)$ for $u\in W_n^\la$ and $v\in W_{n,\la}$ it
follows that
\begin{equation}\label{symmfactor}
C_+=C_{+}^\la C_{+,\la}
\end{equation}
in $\FSH_n(t,-1)$.
With $\chi$ the trivial one-dimensional representation of
$\FSH_n(t,-1)$ we see by Proposition \ref{expansiontheorem} that
\begin{equation}
\pi_s(C_{+,\la})G_\la=\chi(C_{+,\la})G_\la.
\label{eq:31}
\end{equation}
What remains is to compute $(\pi_s(C_+^\la)G_\la)(\wb\la)$.
Let $R=R_s\cup R_{\ell}$ be the decomposition of the root system
$R$ of type $C_n$ in short
and long roots (by convention, for $n=1$ we write $R_s=\emptyset$ and $R_\ell=R$).
We define two $W_n$-invariant functions
$R\rightarrow\CC$ 
by
\begin{equation*}
\kappa_\be:=
\begin{cases}
t\quad &\hbox{ if }\,\be\in R_s,\\
s\qquad &\hbox{ if }\, \be\in R_\ell,
\end{cases}
\qquad\qquad\qquad
\upsilon_\be:=
\begin{cases}
1\qquad &\hbox{ if }\, \be\in R_s,\\
-s^{-1}\qquad &\hbox{ if }\, \be\in R_\ell.
\end{cases}
\end{equation*}
Define for $\be\in R$ the rational function
\[
c_\be(x):=\frac{(x^{\be^\vee}-\kappa_\be)(x^{\be^\vee}+\upsilon_\be)}
{x^{2\be^\vee}-1},
\]
where $\be^\vee:=2\be/\|\be\|^2$ denotes the co-root of $\be$.
Then $H_j^{(t)}=t+c_{\be^j}(x)(s_j-1)$ for $j\in [1,n)$ and $H_n^{(s)}=-1+c_{\beta^n}(x)(s_n-1)$.
Furthermore, for $\al\in\La_n$ with $s_{j}\al\not=\al$ we have 
\begin{equation*}
\begin{split}
c_{e_{j+1}-e_j}(\wb\al)&=\frac{\wb{\al}_{j+1}-t\wb{\al}_j}
{\wb{\al}_{j+1}-\wb{\al}_j},\qquad\qquad\qquad j\in [1,n),\\
c_{-e_n}(\wb\al)&=\frac{(1-s\wb{\al}_n)(1-s^{-1}\wb{\al}_n)}
{1-\wb{\al}_n^2},
\end{split}
\end{equation*}
which are exactly the coefficients appearing in
\eqref{f1} and \eqref{f2}. 
Hence we have 
\begin{equation}\label{uniform}
\pi_s(T_j)G_\al=-G_\al+c_{-\be^j}(\wb\al)\bigl(G_{s_j\al}+G_\al\bigr)
\qquad (j\in [1,n]\;\al\in\Lan\mbox{ with }s_j\al\not=\al)
\end{equation}
by Proposition \ref{expansiontheorem}.

Recall the longest element $w_0=-\id_{\RR^n}$ in $W_n$.
Let $w_0^\la\in W_n^\la$ be the minimal coset representative
of the coset $w_0W_{n,\la}$.
%
\begin{theorem}
Let $s,t\in\CC$ such that $st^\rho\in\FST_n$.
Then
\[
\pi_s(C_+)G_\la=\textup{cst}_\la R_\la
\]
for $\la\in\La_n^+$, with 
\[
\textup{cst}_\la=
\chi(C_{+,\la})\prod_{\be\in R^+\cap (w_0^\la)^{-1}R^-}c_{-\be}(\wb\la).
\]
\end{theorem}
\Proof 
Fix $\la\in\La_n^+$ and write $\la^-:=w_0(\la)=w_0^{\la}(\la)$ for the
antidominant element in the orbit $W_n\la$.
By \eqref{symmfactor},  \eqref{eq:31}, Lemma \ref{steplem} and
Theorem \ref{symmversusnonsymm} we have
\[
\chi(C_{+,\la})\pi_s(C_+^\la)G_\la=\pi_s(C_+)G_\la=\textup{cst}_\la R_\la=
\textup{cst}_\la\sum_{\al\in W_n\la}G_\al,
\]
so it suffices to show that 
\[
\bigl(\pi_s(C_+^\la)G_\la\bigr)(\wb{\la_-})=
\prod_{\be\in R^+\cap (w_0^\la)^{-1}R^-}c_{-\be}(\wb\la).
\]

Using Proposition \ref{expansiontheorem}, the coefficient 
$e_{\la_-}^\la$ in the expansion
\[
\pi_s(T_{w_0^\la})G_\la=\sum_{\ga\in W_n\la}e_\ga^\la
G_\ga
\]
is the same as the coefficient of $G_{\la_-}$ in the expansion of
$\pi_s(C_+^\la)G_\la$ in 
nonsymmetric $BC_n$-type interpolation Macdonald polynomials. 
Hence it suffices to show that
\[
e_{\la_-}^\la=\prod_{\be\in R^+\cap (w_0^\la)^{-1}R^-}c_{-\be}(\wb\la).
\]
Choose a reduced expression $w_0^{\la}=s_{i_1}\cdots s_{i_r}$.
By the proof of Corollary \ref{cor:9}a)
the elements
$\la_k:=s_{i_k}\cdots s_{i_r}\la\in W_n\la$
($1\leq k\leq r+1$, with $\la_{r+1}:=\la$)
are pairwise distinct. In particular,
$\la_k=s_{i_k}\la_{k+1}\not=\la_{k+1}$ ($1\leq k\leq r$) and
$\la_1=\la_-$. It follows from Corollary \ref{cor:9}{\bf c)} that $c_{-\be^{i_k}}(\wb\la_{k+1})$ is well defined for $k\in [1,r]$,  
and part a) of Lemma \ref{th:9} implies that 
\[
c_{-\be^{i_k}}(\wb\la_{k+1})=c_{-s_{i_r}\cdots s_{i_{k+1}}\beta^{i_k}}(\wb\la).
\]
The coefficient $e_{\la_-}^\la$ can now be computed using Proposition
\ref{expansiontheorem} and \eqref{uniform},
\begin{equation*}
\begin{split}
e_{\la^-}^\la&=
c_{-\be^{i_1}}(\wb\la_2)\cdots
c_{-\be^{i_{r-1}}}(\wb\la_r)c_{-\be^{i_r}}(\wb\la_{r+1})\\
&=c_{-s_{i_r}\cdots s_{i_2}\be^{i_1}}(\wb\la)\cdots
c_{-s_{i_r}\be^{i_{r-1}}}(\wb\la)c_{-\be^{i_r}}(\wb\la)\\
&=\prod_{\be\in R^+\cap (w_0^\la)^{-1}R^-}c_{-\be}(\wb\la),
\end{split}
\end{equation*}
where the 
third equality follows from the well known description
of the set of positive roots mapped by $w_0^\la$ to negative roots
in terms of the reduced expression $w_0^\la=s_{i_1}\cdots
s_{i_r}$ (see Section \ref{sec:2}). This concludes the proof of the proposition.\qed
\appendix
\section{Appendix}
For $n=2$, $|\al|\leq 4$ and pseudo-random parameters $q,s,t$ from
$\{\tfrac1{100},\tfrac2{100},\ldots,\tfrac{99}{100}\}$ we have computed
the zeros of $\wb\be\mapsto G_\al(\wb\be;q,s,t)$ for $\be=(\be_1,\be_2)\in\La_2$ satisfying $\be_1,\be_2\in\{-10,\ldots,10\}$ using {\em Wolfram Mathematica} \cite{Wo}, and checked the results by computing the zeros once more for a second choice of
pseudo-random parameters.
We present in Figures 1--9 the outcome of these computations for $|\al|=4$.

The meaning of the colours of the dots in the pictures is as follows:
\begin{itemize}
\item
brown dot: $\al=(\al_1,\al_2)$.
\item
green dot: $(0,0)$.
\item
black dots: $\be\ne(0,0)$ for which $\overline\be$ is an interpolation point, i.e., $\beta\in\Lambda_{2,|\alpha|}$ and $\beta\not=\alpha$.
\item
red dots: $\be\in\{-10,\ldots,10\}^{\times 2}$ for which $\overline\be$ is an extra vanishing
point.
\end{itemize}
\begin{figure}[p]
\centering
\parbox{5cm}{
\includegraphics[width=5cm]{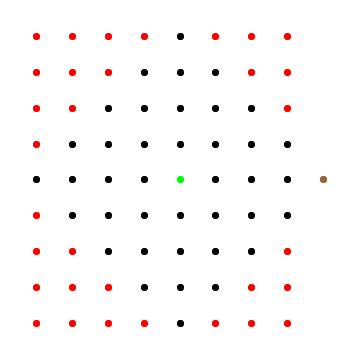}
\caption{$\al=(4,0)$}
\label{fig:1}}
\quad
\begin{minipage}{5cm}
\includegraphics[width=5cm]{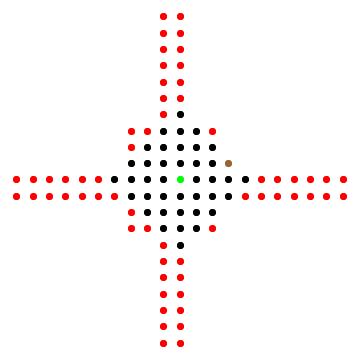}
\caption{$\al=(3,1)$}
\label{fig:2}
\end{minipage}
\quad
\begin{minipage}{5cm}
\includegraphics[width=5cm]{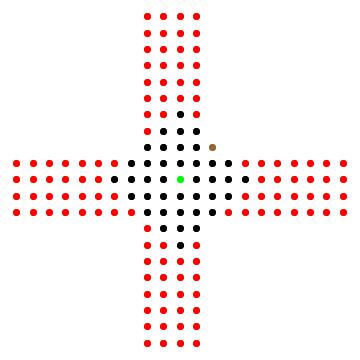}
\caption{$\al=(2,2)$}
\label{fig:3}
\end{minipage}
\end{figure}
\begin{figure}[p]
\centering
\parbox{5cm}{
\includegraphics[width=5cm]{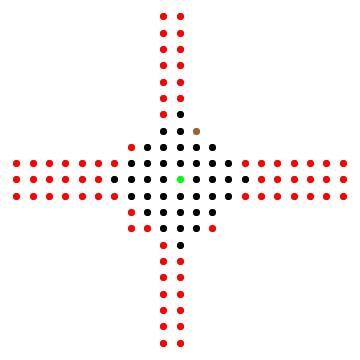}
\caption{$\al=(1,3)$}
\label{fig:4}}
\quad
\begin{minipage}{5cm}
\includegraphics[width=5cm]{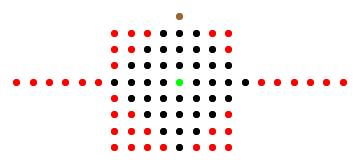}
\caption{$\al=(0,4)$}
\label{fig:5}
\end{minipage}
\quad
\begin{minipage}{5cm}
\includegraphics[width=5cm]{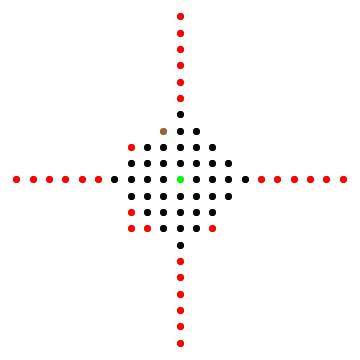}
\caption{$\al=(-1,3)$}
\label{fig:6}
\end{minipage}
\end{figure}
\begin{figure}[p]
\centering
\parbox{5cm}{
\includegraphics[width=5cm]{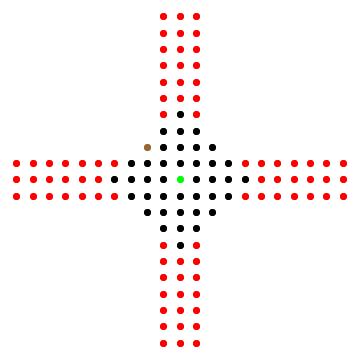}
\caption{$\al=(-2,2)$}
\label{fig:7}}
\quad
\begin{minipage}{5cm}
\includegraphics[width=5cm]{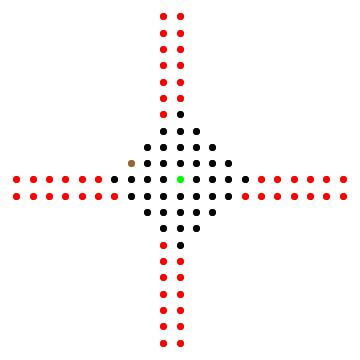}
\caption{$\al=(-3,1)$}
\label{fig:8}
\end{minipage}
\quad
\begin{minipage}{5cm}
\includegraphics[width=5cm]{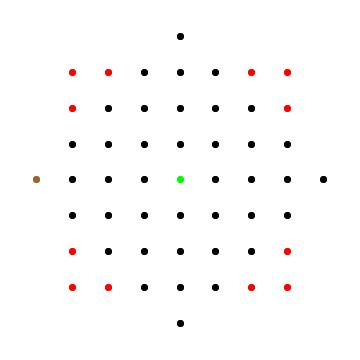}
\caption{$\al=(-4,0)$}
\label{fig:9}
\end{minipage}
\end{figure}

We have not included pictures for $(\al_1,\al_2)$ with $\al_2<0$ because the
following symmetry 
\[
G_{(\al_1,-\al_2)}(\overline{(\be_1,\be_2)};q,s,t)=0\quad\mbox{iff}\quad
G_{(\al_1,\al_2)}(\overline{(\be_1,-\be_2)};q,s,t)=0
\]
holds true in all the computed cases.

All our pictures, including the ones not displayed here, are in agreement with the following conjecture
about the zero set
\[
Z_\al:=\{\be\in\Lan\mid G_\al\big(\,\overline\be;q,s,t\big)=0\}
\]
for $q,s,t\in\CC^*$ with $q$ not a root of unity and $st^\rho\in\FST_n$.
\paragraph{Conjecture.}
Let $\al\in\Lan$, $\be\in W_n\al$.
Let $\overline{V_\be}$ consist of all $\mu\in\Lan$ such that, for $i=1,\ldots,n$,
$\mu_i\ge\be_i$, $\mu_i\le\be_i$ or $\mu_i\in\ZZ$ according to whether
$\be_i>0$, $\be_i<0$ or $\be_i=0$, respectively.
Let $V_\be^0$ consist of all $\mu\in\Lan$ such that, for $i=1,\ldots,n$,
$\mu_i>\be_i$, $\mu_i<\be_i$ or $\mu_i\in\ZZ\backslash\{0\}$
according to whether $\be_i>0$, $\be_i<0$ or $\be_i=0$, respectively.

Then there are sets $V_\be(\al)$ with
$V_\be^0\subseteq V_\be(\al)\subseteq\overline{V_\be}$ ($\be\in W_n\al$) and $V_\al(\al)=\overline{V_\al}$ such that $Z_\al$ is the complement in
$\Lan$ of $\union_{\be\in W_n\al} V_\be(\al)$.
\paragraph{Remark.}
If $\al\in\La_n^+$ (i.e., $\al$ is a partition) then for all $\be\in W_n\al$,
\[
\overline{V_\be}\cap\Lambda_n^+\subseteq
\{\mu\in\La_n^+\mid \mu\supseteq\al\},\quad\mbox{hence}\quad
V_\be(\al)\cap\Lambda_n^+\subseteq
V_\al(\al)\cap\Lambda_n^+=
\{\mu\in\La_n^+\mid \mu\supseteq\al\}.
\]
Thus the Conjecture implies that, for $\al$ a partition, $Z_\al\cap\La_n^+$
consists of all partitions $\mu$ which do not include the partition $\al$.
Compare with the case of Okounkov's $BC_n$-type interpolation Macdonald polynomials,
see~\eqref{extravansym}.
\bPP
Our pictures suggest possible characterizations of the sets $V_\be(\al)$
in the Conjecture. These seem to be quite similar
to the case of root system of type $A$
(see  \cite[Theorem 4.5]{Kn97})
if $n=2$, $\al_1>0$, $\al_2\ge0$,
or possibly for general $n$, $\al_1,\ldots,\al_{n-1}>0$, $\al_n\ge0$.
In contrast, in Figures 6 and 8, where $\al_1<0$, we see that
$\la\in V_\al(\be)$ is not
always given by one set of inequalities for $\la_1,\la_2$.

In \cite[\S 4]{Kn97} Knop introduced a new partial order on $\mathbb{Z}_{\geq 0}^n$ to describe the extra vanishing of the type $A_{n-1}$ nonsymmetric interpolation Macdonald polynomials. Knop's order relation between two elements $\alpha,\beta\in\mathbb{Z}_{\geq 0}^n$ can be described in terms of 
inequalities of the entries of the corresponding partitions $\alpha^+, \beta^+\in\Lambda_n^+$, with the strictness or non-strictness of the inequalities depending on the defining permutation $\pi=u_\alpha u_\beta^{-1}$ (here 
$u_\alpha, u_\beta\in S_n$ are the permutations of shortest lengths such that $u_\alpha(\alpha^+)=\alpha$ and 
$u_\beta(\beta^+)=\beta$). 
One may wonder whether the extra vanishing of the nonsymmetric 
$BC_n$-type interpolation Macdonald polynomials
$G_\al(x;q,s,t)$
can be formulated in terms of a hyperoctahedral version of Knop's partial order, with the strictness or non-strictness of the entries of the associated partitions 
$\alpha^+, \beta^+\in\Lambda_n^+$ now described in terms of $w_\alpha w_\beta^{-1}\in W_n$ for $\al,\be\in\Lambda_n$.

\end{document}